\newtheorem{theorem}{Theorem}[section] 
\newtheorem{lemma}[theorem]{Lemma}
\newtheorem{proposition}[theorem]{Proposition}
\newtheorem{conjecture}[theorem]{Conjecture}
\newtheorem{corollary}[theorem]{Corollary}
\theoremstyle{definition}
\newtheorem{definition}[theorem]{Definition}
\newtheorem{remark}[theorem]{Remark}
\newcommand{\PKS}{P_{\mathrm KS}}
\newcommand{\RR}{\mathbb{R}}
\newcommand{\ZZ}{\mathbb{Z}}
\newcommand{\NN}{\mathbb{N}}
\newcommand{\PP}{\mathbb{P}}
\title[Diagonalizations of denormalized volume polynomials]{Diagonalizations of\\ denormalized volume polynomials}
\author{Julius Ross}
\author{Hendrik S\"uss}
\address{Mathematics, Statistics and Computer Science, University of Illinois at Chicago, 851 S. Morgan Street, 322 Science and Engineering Offices, Chicago, IL~60607, United States}
\email{juliusro@uic.edu}
\address{Friedrich-Schiller-Universit\"at Jena, Fakult\"at f\"ur Mathematik und Informatik, Institut für Mathematik, Ernst-Abbe-Platz 2, 07743 Jena, Germany}
\email{hendrik.suess@uni-jena.de}
\subjclass[2020]{32J27, 52A39 (Primary) 52B40, 14C17, 52A40 (Secondary)}
\begin{document}

\begin{abstract}
  We show that diagonalization, products and lower truncations preserve the property of being a denormalized volume polynomial. We also discuss an application to poset inequalities.
\end{abstract}
\maketitle

\section{Introduction}\label{sec:introduction}
Given a  projective variety $X$ of dimension $d$ and ample  divisor classes $D_1,\ldots,D_n$ we call the homogeneous polynomial
\[\sum_{|\alpha|=d} c_{\alpha} \cdot x_1^{\alpha_1}\cdots x_n^{\alpha_n} = d! \sum_{|\alpha|=d} \left(D_1^{\alpha_1} \cdots D_n^{\alpha_n}\right) x_1^{\alpha_1}\cdots x_n^{\alpha_n}\]
of degree $d$ the \emph{associated denormalized volume polynomial}. Here, $\left(D_1^{\alpha_1} \cdots D_n^{\alpha_n}\right)$ denotes the intersection product of the corresponding divisor classes. More generally, we also call  limits of positive multiples of such polynomials \emph{denormalized volume polynomials}. Denormalized volume polynomials are of interest, because they come with strong positivity properties for their coeffiecients. Most prominently the \emph{Khovanskii-Teissier} inequality
\begin{equation}
  \label{eq:khovanskii-teissier}
  c_{\alpha}^2  \geq c_{\alpha-e_i+e_j}\cdot c_{\alpha+e_i-e_j}
\end{equation}
which holds more generally for denormalized Lorentzian polynomials (see \cite{BrandenHuh})  and the \emph{reverse Khovanskii-Teissier} inequality
\begin{equation}
  \label{eq:rKT}
  c_{(\alpha+ \beta+\gamma)} \cdot c_{(\alpha+ |\beta+\gamma|e_1)} \leq \binom{|\beta|+|\gamma|}{|\beta|} c_{(\alpha + |\gamma|e_1+\beta)}\cdot c_{(\alpha + |\beta|e_1+ \gamma)},
\end{equation}
see \cite{zbMATH07003736,zbMATH07745002,hu2023intersection} and Lemma~\ref{lem:rKT} below.

Interesting denormalized volume polynomials arise as generating functions from combinatorial counting problems. As a prominent example the Schur polynomials $s_\lambda$ are denormalized volume polynomials (see \cite{zbMATH07542930}). Their coefficients (called Kostka numbers) count to the number of Young tableaux of shape $\lambda$ for a given weight vector. In this setting positivity properties of the polynomial translate in interesting inequalities for combinatorial quantities.

It is of general interest to determine which operations on polynomials preserve these positivity properties. Consider for example the diagonalization
$$g(x_1,\ldots,x_{n-1}):=f(x_1,\ldots,x_{n-1},x_{n-1})$$ of a denormalized volume polynomial $f(x_1,\ldots,x_n)$. If the positivity properties were preserved we would obtain inequalities of the same type for the sums of coefficients $s_{(\alpha,i)}=\sum_{i+j=d-|\alpha|} c_{(\alpha,i,j)}$. In the case of generating functions for combinatorial quantities these sums might be meaningful quantities, as well. In the case of Kostka numbers we obtain the number of Young tableaux in which only part of the weight vector is fixed.

Our main result states that the property of being a denormalized volume polynomial is indeed preserved by diagonalization.
\begin{theorem}[Diagonalization]
    \label{thm:diagonalization}
  If $$p(v_1,\ldots,v_{k},x_1,\ldots,x_m) \in \RR[v_1, \ldots,v_{k},x_1,\ldots,x_m]$$ is a denormalized volume polynomial, then the diagonalization $p(v_1,\ldots,v_{k},u,\ldots,u)$ is also a denormalized volume polynomial.
\end{theorem}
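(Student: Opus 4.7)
The plan is to realise the diagonalized polynomial as the denormalized volume polynomial of an explicit projective variety obtained from $X$ via a projective bundle and a generic hyperplane section. First, by continuity of the diagonalization map on coefficients together with the definitional closure of the class of denormalized volume polynomials under positive scaling and limits, I reduce to the case where
\[
p \;=\; d!\sum (D^\beta E^\gamma)\, v^\beta x^\gamma
\]
is the honest DVP attached to a single projective variety $X$ of dimension $d$ with ample classes $D_1,\ldots,D_k,E_1,\ldots,E_m$. After a further perturbation $E_j \rightsquigarrow E_j + tA$ for $A$ very ample (and $t\to 0$ at the end) I may in fact assume that each $E_j$ is very ample.

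The geometric input is the projective bundle
\[
\pi\colon Y \;=\; \mathbb{P}\bigl({\textstyle\bigoplus_{j=1}^{m}}\mathcal{O}_X(E_j)\bigr) \;\longrightarrow\; X,
\]
of dimension $d+m-1$. Since $\bigoplus_j\mathcal{O}_X(E_j)$ is a very ample vector bundle, the tautological class $h = c_1(\mathcal{O}_Y(1))$ is ample on $Y$. Let $Y' \subset Y$ be a general complete intersection of $m-1$ divisors in $|h|$; by Bertini, $Y'$ is an irreducible projective variety of dimension $d$ whose fundamental class satisfies $[Y'] = h^{m-1}$ in the Chow ring of $Y$.

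The key computation is to show that the DVP of $(Y', \pi^*D_i|_{Y'}, h|_{Y'})$ coincides with $p(v_1,\ldots,v_k,u,\ldots,u)$. Combining the projection formula with the standard pushforward identity for a split projective bundle,
\[
\pi_{\ast}\, h^{\,m-1+\ell} \;=\; \sum_{|\gamma|=\ell}E_1^{\gamma_1}\cdots E_m^{\gamma_m}
\]
(the $\ell$-th complete homogeneous symmetric function in the Chern roots), one obtains
\[
\bigl(\pi^*D^\beta\cdot h^\ell\bigr)_{Y'} \;=\; \bigl(\pi^*D^\beta\cdot h^{\,\ell+m-1}\bigr)_{Y} \;=\; \sum_{|\gamma|=\ell}(D^\beta E^\gamma)_X
\]
for all $(\beta,\ell)$ with $|\beta|+\ell=d$; multiplying by $d!$ and summing matches the diagonalization coefficient by coefficient.

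The main obstacle is that the classes $\pi^*D_i$ are only nef on $Y'$ (they vanish along the directions contracted by $\pi|_{Y'}$), so the tuple $(Y',\pi^*D_i|_{Y'},h|_{Y'})$ is not literally a legal DVP input. I would resolve this with one further perturbation: replace $\pi^*D_i$ by the ample class $\pi^*D_i + \varepsilon h$ on $Y'$ and let $\varepsilon\to 0$, staying inside the DVP class by its closure under limits. It is reassuring that on the toric side this construction is precisely the Cayley construction attached to the polytopes of the $E_j$, so the proof can be viewed as a geometric extension of that classical combinatorial manoeuvre.
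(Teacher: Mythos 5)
Your argument is correct in outline, and it takes a genuinely different, more geometric route than the paper. The paper never leaves the world of formal closure properties: it multiplies $p$ by $\big(u+\sum_i x_i\big)^{d+\ell}$, applies the differential operator attached to the complete homogeneous symmetric (Schubert) polynomial $s_{d+\ell}$, and extracts the diagonalization through a double limit $\ell\to\infty$, $m\to\infty$ (Lemma~\ref{lem:technical-derived} and Proposition~\ref{prop:general-kahn-saks}). You instead exhibit an explicit variety computing the diagonalization: the Cayley construction $Y=\PP\big(\bigoplus_j\mathcal{O}_X(E_j)\big)$ cut down by $m-1$ general members of $|h|$, with the pushforward identity $\pi_*h^{m-1+\ell}=\sum_{|\gamma|=\ell}E^\gamma$ playing exactly the role that $\partial_{s_d}f=N^{-1}(f)(1,\ldots,1)$ plays in the paper --- the complete homogeneous symmetric function is the common kernel of both proofs, appearing for you as a Segre class and in the paper as a Schubert polynomial. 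Your computation of $(\pi^*D^\beta\cdot h^\ell)_{Y'}$ checks out, and the two perturbations you flag (making the $E_j$ very ample, and replacing $\pi^*D_i$ by $\pi^*D_i+\varepsilon h$) are legitimately absorbed by the closure of the class under limits. What your route buys is a one-shot geometric realization with a transparent toric interpretation (the Cayley trick), and no limits over $\ell$ and $m$; what it costs is the initial reduction to honest volume polynomials attached to a single $(X;D,E)$, which the paper's formal argument never needs. Two routine points there deserve a sentence each: real ample classes need not be integral, so to form the line bundles $\mathcal{O}_X(E_j)$ you must approximate by rational classes and clear denominators by a \emph{common} factor (rescaling all $E_j$ simultaneously only rescales $u$ in the diagonalization, so this is harmless, but rescaling them independently would not be); and Bertini is needed to keep $Y'$ irreducible, which requires the intermediate cuts to have dimension at least two --- automatic here since the last cut is applied to a variety of dimension $d+1$.
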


Using this we can prove that a number of other operations preserve being a denormalized volume polynomial:

\begin{theorem}[Products, =Corollary~\ref{cor:denorm-product}]
The product of two denormalized volume polynomials is again a denormalized volume polynomial.
\end{theorem}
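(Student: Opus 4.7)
The plan is to derive this from Theorem~\ref{thm:diagonalization} combined with the standard product construction on projective varieties. Let $f(x_1,\ldots,x_n)$ and $g(y_1,\ldots,y_m)$ be denormalized volume polynomials, which I first treat as lying in disjoint variables, and argue in two steps.

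The first step is to verify that the external product $f(x)\,g(y) \in \RR[x_1,\ldots,x_n,y_1,\ldots,y_m]$ is itself a denormalized volume polynomial. When $f$ and $g$ are honest volume polynomials arising from $(X, D_1,\ldots,D_n)$ of dimension $d$ and $(Y, E_1,\ldots,E_m)$ of dimension $e$, consider $X\times Y$ together with the pulled-back classes $\pi_X^\ast D_i$ and $\pi_Y^\ast E_j$. By the projection formula, $(\pi_X^\ast D^\alpha \cdot \pi_Y^\ast E^\beta)_{X\times Y}$ equals $(D^\alpha)_X (E^\beta)_Y$ when $|\alpha|=d$ and $|\beta|=e$, and vanishes otherwise for dimension reasons. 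Assembling these intersection numbers into the denormalized volume polynomial of $X\times Y$ produces exactly $\binom{d+e}{d}\, f(x)\, g(y)$, so $f(x)g(y)$ is a positive rational multiple of a genuine denormalized volume polynomial. The case where $f$ and $g$ are only limits of positive multiples of such follows by a diagonal subsequence, since each $\lambda_n\mu_n\, f_n(x) g_n(y)$ is again such a positive multiple.

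The second step handles the case where the product is meant inside a single polynomial ring $\RR[x_1,\ldots,x_n]$ with shared variables, which is where Theorem~\ref{thm:diagonalization} genuinely comes into play. Starting from $h(x,y) := f(x)\, g(y)$ supplied by the first step as a denormalized volume polynomial in $2n$ variables, I would apply Theorem~\ref{thm:diagonalization} once for each index $i=1,\ldots,n$, collapsing the pair $\{x_i, y_i\}$ to a common new variable $u_i$ (taking the remaining $2n-2$ variables as the $v$'s and $m=2$). Each application preserves the property of being a denormalized volume polynomial, and after $n$ iterations the output is $f(u_1,\ldots,u_n)\, g(u_1,\ldots,u_n)$; relabelling $u_i \mapsto x_i$ then yields the desired polynomial product $f(x)\, g(x)$.

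The main obstacle I anticipate is the passage to limits in the first step, where the approximation by positive multiples of genuine volume polynomials must be pushed through the product construction; once the combinatorial factor $\binom{d+e}{d}$ is carried along, a standard diagonal argument suffices. The geometric computation on $X \times Y$ and the iterated use of Theorem~\ref{thm:diagonalization} in the second step are then essentially automatic, which is consistent with the fact that the statement is advertised as a corollary of the diagonalization theorem.
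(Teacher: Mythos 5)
Your proof is correct and follows essentially the same route as the paper: handle disjoint variables via the product variety $X\times Y$ with pulled-back classes, then collapse shared variables by iterating Theorem~\ref{thm:diagonalization}. The only cosmetic difference is that you track the factor $\binom{d+e}{d}$ on the denormalized side, whereas the paper works on the normalized side, invoking Lemma~\ref{lem:volume-product} together with the identity $N(fg)=N(f)N(g)$ for disjoint variable sets, so no combinatorial factor appears.
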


\begin{theorem}[Normalization, $\subset$ Corollary~\ref{cor:normalizationpreservesvolume}]
If $\sum_\alpha c_\alpha x^\alpha$ is a denormalized volume polynomial then its normalization
$$\sum_\alpha \frac{c_\alpha}{\alpha!} x^{\alpha}$$
is again a denormalized volume polynomial.
\end{theorem}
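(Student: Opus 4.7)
My plan is to derive the Normalization statement as a corollary of the Diagonalization theorem (Theorem~\ref{thm:diagonalization}).

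First, by continuity of the normalization operation and the definition of denormalized volume polynomials as limits of positive multiples of intersection polynomials, it suffices to prove the statement for the ``pure'' case $f(x) = d!\sum_{|\alpha|=d} D^\alpha x^\alpha$ arising from ample divisors $D_1,\ldots,D_n$ on a projective variety $X$ of dimension $d$. In this case
\[
  \sum_\alpha \frac{c_\alpha}{\alpha!}\, x^\alpha \;=\; \sum_\alpha \frac{d!}{\alpha!}\,D^\alpha x^\alpha \;=\; \bigl(x_1 D_1 + \cdots + x_n D_n\bigr)^d,
\]
paired with $[X]$, and this coincides with the total diagonal $x_i^{(k)}\mapsto x_i$ of the multilinear polarization
\[
  F(x^{(1)},\ldots,x^{(d)}) \;:=\; \sum_{i_1,\ldots,i_d\in[n]} D_{i_1}\cdots D_{i_d}\, x^{(1)}_{i_1}\cdots x^{(d)}_{i_d},
\]
since regrouping by $\alpha$ gives the multinomial coefficients $d!/\alpha!$.

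The strategy is to realise the normalization as the result of iterated diagonalization of a denormalized volume polynomial $G$ in the $nd$ auxiliary variables $\{x_i^{(k)}\}$. Given such a $G$ satisfying $G\big|_{\mathrm{diag}} = F\big|_{\mathrm{diag}}$, the conclusion will follow by $n$ successive applications of Theorem~\ref{thm:diagonalization}: for each $i\in[n]$ in turn, treat all $x_j^{(l)}$ with $j\ne i$ as the $v$-block and collapse $x_i^{(1)},\ldots,x_i^{(d)}$ down to a single $x_i$. Each collapse preserves being denormalized, and after all $n$ of them the resulting polynomial is precisely the normalization of~$f$.

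The hard part will be producing $G$. Writing $G$ as the denormalized volume polynomial of an auxiliary dimension-$d$ variety $Y$ with ample divisors $E_i^{(k)}$, the condition $G\big|_{\mathrm{diag}} = F\big|_{\mathrm{diag}}$ becomes
\[
  \sum_{\substack{\gamma_{i,k}\ge 0\\ \sum_k \gamma_{i,k}=\alpha_i}}\, \prod_{i,k}\bigl(E_i^{(k)}\bigr)^{\gamma_{i,k}} \;=\; \frac{D^\alpha}{\alpha!}\qquad\text{for every $\alpha$ with }|\alpha|=d.
\]
The na\"ive choice $Y=X$ with $E_i^{(k)}=\lambda_k D_i$ already fails for $d\ge 2$: separating variables reduces the system to $h_k(\lambda_1,\ldots,\lambda_d)=1/k!$ for $k\le d$ (with $h_k$ the complete homogeneous symmetric polynomial), equivalently $\prod_k(1-\lambda_k t)\equiv 1/\sum_{k\le d}t^k/k!\pmod{t^{d+1}}$, and the roots of the truncated exponential are not real. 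A genuinely different auxiliary variety is therefore needed; the natural candidate is $Y=(\PP^1)^d$ with $E_i^{(k)} = \sum_l a_{i,k,l}\, P_l$ (where $P_l$ is the pull-back of the point class from the $l$-th factor), and with the positive parameters $a_{i,k,l}$ obtained by solving a polynomial system whose solvability in $\RR_{>0}$ should be amenable to the Khovanskii--Teissier inequalities~\eqref{eq:khovanskii-teissier} for $D_1,\ldots,D_n$. Producing this construction — or a suitable limiting/perturbation variant of it — is the technical heart of the proof.
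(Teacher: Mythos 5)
Your reduction to the pure case and the idea of deriving normalization from Theorem~\ref{thm:diagonalization} by lifting $\left(\sum_i x_iD_i\right)^d$ to a denormalized volume polynomial $G$ in $nd$ variables and then collapsing each block $x_i^{(1)},\ldots,x_i^{(d)}$ is a coherent strategy, and it is genuinely different from the paper's route. But the proof has a real gap exactly where you say it does: the auxiliary polynomial $G$ is never constructed, and its existence is not a routine matter --- it is essentially a ``volume-polynomial polarization'' theorem, of comparable depth to the statement you are trying to prove. The difficulty is not an artifact of your particular ansatz failing. Already for $d=2$, $n=2$ the natural candidate, the multilinear polarization $F=\sum_{i,j}(D_iD_j)\,x^{(1)}_ix^{(2)}_j$, is \emph{not} a volume polynomial: a degree-two volume polynomial is a limit of forms $\left(\sum_i y_iE_i\right)^2$ on a surface, so by the Hodge index theorem its symmetric matrix has at most one positive eigenvalue, whereas the matrix of $F$ is of block type $\left(\begin{smallmatrix}0&M\\M^{T}&0\end{smallmatrix}\right)$ with $M=\left(\begin{smallmatrix}D_1^2&D_1D_2\\D_1D_2&D_2^2\end{smallmatrix}\right)$ and hence has two positive eigenvalues whenever $M$ is nonsingular. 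So any admissible $G$ must be quite far from the polarization, and it is not clear that your $(\PP^1)^d$ ansatz (or a perturbation of it) has a solution in positive parameters; nothing in the Khovanskii--Teissier inequalities obviously produces one.

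For comparison, the paper avoids this construction problem entirely. It first proves (Proposition~\ref{prop:weighted-truncation}, via Lemma~\ref{lem:technical-derived}, Proposition~\ref{prop:volume-schubert} and Lemma~\ref{lem:volume-trafo}) that the weighted truncation $f=\sum_\beta c_\beta x^\beta\mapsto\sum_\beta\binom{\alpha}{\beta}c_\beta x^\beta$ preserves (denormalized) volume polynomials; applying this with $\alpha=\underline{\mathbf a}$, rescaling by $a^{-d}$, and letting $a\to\infty$ produces the factor $1/\beta!=\lim_{a\to\infty}\binom{a}{\beta_1}\cdots\binom{a}{\beta_n}a^{-|\beta|}$ directly, with no polarization and no use of Theorem~\ref{thm:diagonalization}. (Your use of Theorem~\ref{thm:diagonalization} would at least not be circular, since its proof does not rely on Corollary~\ref{cor:normalizationpreservesvolume}; the problem is solely the missing existence of $G$.) As it stands, your argument establishes the statement only conditionally on that construction, so it is not a complete proof.
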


\begin{theorem}[Truncation, =Corollary~\ref{cor:truncation}]
If $\sum_\alpha c_\alpha x^\alpha$ is a denormalized volume polynomial and $\beta$ is given then the truncation
$$\sum_{\alpha\ge \beta}  c_{\alpha} x^{\alpha}$$
is again a denormalized volume polynomial.
\end{theorem}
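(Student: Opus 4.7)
The plan is to decompose the truncation as a product of two denormalized volume polynomials so that the Products theorem above applies. Writing $\alpha=\beta+\gamma$ with $\gamma\ge 0$ and $|\gamma|=d-|\beta|$ gives the factorisation
\[
  \sum_{\alpha\ge\beta} c_{\alpha}\,x^{\alpha}
  \;=\;x^{\beta}\cdot h(x),
  \qquad
  h(x)\;:=\;\sum_{|\gamma|=d-|\beta|} c_{\beta+\gamma}\,x^{\gamma}.
\]
If $|\beta|>d$ there are no $\alpha$ with $|\alpha|=d$ and $\alpha\ge\beta$, so the truncation is zero and the statement is trivial. Otherwise it is enough to check separately that each of $x^{\beta}$ and $h(x)$ is a denormalized volume polynomial.

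For the monomial factor I would take $Y=\PP^{\beta_{1}}\times\cdots\times\PP^{\beta_{n}}$ together with the $n$ pulled-back hyperplane classes $H_{1},\ldots,H_{n}$: the only non-vanishing top intersection is $(H_{1}^{\beta_{1}}\cdots H_{n}^{\beta_{n}})=1$, so the associated polynomial is $|\beta|!\,x^{\beta}$. The $H_{i}$ are only nef, but this is remedied by working with the ample perturbations $H_{i}+\varepsilon\sum_{j}H_{j}$ and letting $\varepsilon\to 0$, showing that $x^{\beta}$ is a denormalized volume polynomial in the sense of the paper (i.e.\ as a limit of positive multiples of such polynomials arising from ample data).

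For $h(x)$ I first reduce to the case where $f$ is the denormalized volume polynomial of an honest variety $(X,D_{1},\ldots,D_{n})$ with each $D_{i}$ ample; the general limit case then follows because truncation is a coordinatewise projection on coefficients and therefore commutes with taking limits of polynomials. Choose $m\gg 0$ so that each linear system $|m D_{i}|$ is very ample and, by iterated Bertini, pick $\beta_{i}$ general members of $|m D_{i}|$ for $i=1,\ldots,n$ whose common intersection is a smooth subvariety $Z\subset X$ of dimension $d-|\beta|$, with numerical class $[Z]=m^{|\beta|}D_{1}^{\beta_{1}}\cdots D_{n}^{\beta_{n}}$. The projection formula then gives $(D^{\gamma}|_{Z})_{Z}=m^{|\beta|}\,(D^{\beta+\gamma})_{X}$ for every $\gamma$ with $|\gamma|=d-|\beta|$, so the denormalized volume polynomial attached to $(Z,D_{1}|_{Z},\ldots,D_{n}|_{Z})$ equals $h(x)$ up to the positive scalar $(d-|\beta|)!\,m^{|\beta|}/d!$. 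Since the restrictions $D_{i}|_{Z}$ are ample, $h(x)$ is a denormalized volume polynomial.

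Combining the two factors via the Products theorem shows that the truncation $x^{\beta}\,h(x)$ is a denormalized volume polynomial in the variety case; the limit case then extends this to arbitrary $f$. The main obstacle is the Bertini step in the middle paragraph: one must ensure that iterated general intersections really produce a subvariety of the expected dimension representing $m^{|\beta|}D^{\beta}$, keep careful track of the positive rescaling factors so that the resulting polynomial coincides with $h$ up to a positive constant, and verify that the reduction to the variety case is compatible with the definition of a denormalized volume polynomial as a limit of positive multiples.
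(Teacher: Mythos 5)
Your proof is correct, and once unwound it is essentially the same as the paper's. Your factorisation $f_{\geqslant\beta}=x^{\beta}\cdot h$ with $h=\sum_{\gamma}c_{\beta+\gamma}x^{\gamma}$ is precisely $x^{\beta}\cdot N^{-1}(\partial^{\beta}N(f))$, which is what the paper's proof of Corollary~\ref{cor:truncation} produces via $f_{\geqslant\beta}=N^{-1}\bigl(\textstyle\int^{\beta}\partial^{\beta}N(f)\bigr)$, since $\int^{\beta}g=N\bigl(x^{\beta}N^{-1}(g)\bigr)$. The only place you genuinely diverge is in showing $h$ is a denormalized volume polynomial: you argue directly via iterated Bertini that a general complete intersection $Z$ of members of $|mD_i|$ realises the class $m^{|\beta|}D^{\beta}$, so that $(Z,D_1|_Z,\ldots,D_n|_Z)$ gives $h$ up to a positive scalar. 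This is a hands-on reproof, for the Schubert polynomial $s=x^{\beta}$, of the paper's Proposition~\ref{prop:volume-schubert} together with Remark~\ref{rem:volume-partial-derivatives} (partial derivatives of volume polynomials are volume polynomials). The advantage of citing the Schubert-polynomial result as the paper does is that it already packages the existence of an irreducible representative; the advantage of your version is that it is more elementary and self-contained for the monomial case. Both routes depend on the Product theorem and hence on Theorem~\ref{thm:diagonalization}. Two minor points to be careful about: you do not need $Z$ smooth, only irreducible, which is what Bertini supplies (and which matters if $X$ itself is singular); and when $|\beta|=d$ the general intersection is a finite set rather than a single variety, though choosing one point gives the constant polynomial and the argument still closes.
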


By an analogous result for denormalized Lorentzian polynomials \cite[Lemma~4.8.]{branden2021lower} we know that diagonalizations of denormalized volume polynomials are denormalized Lorentzian. However, this would be a strictly weaker statement, see \cite[Example~14]{zbMATH07822675}. In particular, only a weaker form of the reverse Khovanskii-Teissier inequality (\ref{eq:rKT}) can be deduced, see \cite{hu2023intersection}.

In Section~\ref{sec:posets} we give an illustrating application of Theorem~\ref{thm:diagonalization} in the context of poset inequalities.

\begin{remark}
Since being a volume polynomial is stronger than being Lorentzian, our results along with  Theorem~\ref{thm:diagonalization} in conjunction with (\ref{eq:rKT}) (or (\ref{eq:khovanskii-teissier})) gives rise to inequalities for intersection numbers (which are new, at least as far as we know).  For instance if $A_i$ and $B_i$ are ample divisors on $X$ then
  \begin{align*}
    &\left(\sum_{i+j=d} \left(A_1^i \cdot A_2^j\right)\right) \left(B_1\cdots B_d\right)\\
    \leq& \binom{d}{k} \left(\sum_{i+j=k} \left(A_1^i\cdot A_2^j \cdot B_{k+1}\cdots B_d\right) \right) \left(\sum_{i+j=d-k} \left(A_1^i\cdot A_2^j \cdot B_{1}\cdots B_k\right) \right)  
  \end{align*}
  holds.
\end{remark}

\subsection*{Acknowledgements} We would like to thank Thomas Wannerer for invaluable discussions, which initiated the whole project. This material is based upon work supported by the National Science Foundation under Grant No. DMS-1749447. The second named author was supported by the EPSRC grants EP/V013270/1, EP/V055445/1 and by funding from the Carl Zeiss Foundation.
%The third named author was supported by DFG grant WA3510/3-1.   %**THOMAS: who was the person who asked you originally about a new proof of Kahn-Saks?**

%\subsection*{Organization} %In \S\ref{sec:preliminaries} we recall briefly the notion of Lorentzian polynomials from \cite{} and Dually Lorentzian Polynomias from \cite{}.   In \S\ref{} we use our main result of \cite{} to show that (up to a certain shift) the diagonalization of denormalized Lorentzian polynomials are again denormalized Lorentzian.   Then in \S\ref{sec:stanley} we recall Stanley's result that computes numbers of linear extensions of a poset in terms of the mixed volumes of certain polytope.   **finish me**

%\subsection{Comparison with other work:}

\section{Preliminaries}\label{sec:preliminaries}
\subsection{Notation and Conventions}
In this paper we denote the non-negative integers by $\NN$.  For an elements $\alpha = (\alpha_1,\ldots,\alpha_n) \in  \NN^n$ and $\beta = (\beta_1,\ldots,\beta_n) \in  \NN^n$ set $|\alpha|=\sum_i \alpha_i$, $\alpha!=\prod_i \alpha_i!$, $\binom{\alpha}{\beta}=\prod_i \binom{\alpha_i}{\beta_i}$ and $\binom{|\alpha|}{\alpha}=\frac{|\alpha|!}{\alpha!}$. 

We denote the $i$-th canonical basis vectors in $\NN^^n$ by $e_i$ and the element $(d,\ldots,d) \in \NN^n$ by $\underline{\mathbf{d}}$.  We write $x^\alpha$ for the monomial in $\RR[x_1,\ldots,x_n]$ with exponent vector $\alpha$, i.e. $x^\alpha = \prod_{i=1}^n x_i^{\alpha_i}$. Let $s \in \RR_{\kappa}[x_1, \ldots x_n]$ be a homogeneous polynomial. Then we also consider the corresponding differential operator $\partial_s:=s(\partial_{x_1},\ldots,\partial_{x_n}) \in \RR[\partial_{x_1},\ldots,\partial_{x_n}]$.

Moreover, for any homogeneous polynomial $s(x_1,\ldots,x_n)$ we define the $j$-th \emph{derived polynomial} $s^{(j)}(x_1,\ldots,x_n)$ by requiring $$s(x_1+u,\ldots,x_n+u) = \sum_{j=0}^d s^{(j)}(x_1,\ldots,x_n) u^j.$$  
\subsection{Volume polynomials and denormalized volume polynomials}
\label{sec:tools}
\begin{definition}
  Given a  projective variety $X$ of dimension $d$ and nef  divisor classes $D_1,\ldots,D_n$ we call
  \[\left(\sum x_i D_i \right)^d\]
  the \emph{associated volume polynomial}. More generally, we also call  limits of positive multiples of such polynomials \emph{volume polynomials}.

  The \emph{normalization} of a homogeneous polynomial $f=\sum_\alpha c_\alpha x^\alpha$ is defined by
  \[
    N(f)=\sum_\alpha \frac{c_\alpha}{\alpha!}  x^\alpha.
  \]
  We call $N^{-1}(f)$ the \emph{denormalization} of $f$.
  
  Consequently we say that $f$ is a \emph{denormalized volume polynomial} if $N(f)$ is a volume polynomial.
\end{definition}

% begin{lemma}
%   \label{lem:volume-diagonalization}
%   Given a volume polynomial $f(x_1,\ldots,x_n)$. The diagonalization \[g(x_1,\ldots,x_{n-1})=f(x_1,\ldots,x_{n-1},x_{n-1})\] is again a volume polynomial.
% \end{lemma}
% \begin{proof}
%   If $f=\big(\sum_{i=1}^{n} x_i D_i\big)^d$, then we obtain \[g=\left(\sum_{i=1}^{n-2}x_i D_i + x_{n-1}(D_{n-1}+D_{n})\right)^d.\]
% \end{proof}

\begin{lemma}
  \label{lem:volume-trafo}
  Assume $f$ is a volume polynomial and $A$ is a matrix with non-negative entries. Then $f(Ax')$ is again a volume polynomial.
\end{lemma}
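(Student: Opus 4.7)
The plan is essentially a direct unwinding of the definition. First I would reduce to the case where $f$ is itself of the form $\bigl(\sum_{i=1}^n x_i D_i\bigr)^d$ for nef divisor classes $D_1,\ldots,D_n$ on some projective variety $X$ of dimension $d$; the general case follows by continuity since the substitution $x \mapsto Ax'$ is a linear (hence continuous) map on the vector space of polynomials of degree $d$, and the set of volume polynomials is closed by definition as the closure of positive scalar multiples of such powers.

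Next I would compute the substitution explicitly. Writing $A = (a_{ij})$ with $a_{ij} \geq 0$, we have $(Ax')_i = \sum_j a_{ij} x'_j$, so
\[
f(Ax') = \left(\sum_{i=1}^n \Bigl(\sum_{j=1}^m a_{ij} x'_j\Bigr) D_i\right)^{\!d} = \left(\sum_{j=1}^m x'_j D'_j\right)^{\!d}, \qquad D'_j := \sum_{i=1}^n a_{ij} D_i.
\]
The key observation is that each $D'_j$ is a non-negative linear combination of nef classes on $X$, and since the nef cone is closed under non-negative linear combinations, each $D'_j$ is again nef on the \emph{same} variety $X$. Hence $f(Ax')$ is the volume polynomial associated to the nef classes $D'_1,\ldots,D'_m$ on $X$.

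I do not anticipate a serious obstacle here: the lemma is really just the statement that the volume polynomial construction is natural with respect to pullback along non-negative substitutions, and the only content is that the nef cone is closed under non-negative linear combinations. The only technical care needed is the limit step, and for that one simply notes that if $f = \lim_k \lambda_k f_k$ is a limit of positive multiples of volume polynomials in some fixed finite-dimensional space of polynomials, then $f(Ax') = \lim_k \lambda_k f_k(Ax')$ is a corresponding limit for the transformed polynomials, each of which is a volume polynomial by the argument above.
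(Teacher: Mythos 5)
Your proof is correct and follows essentially the same route as the paper: substitute $x\mapsto Ax'$ and observe that $f(Ax')$ is the volume polynomial of the nef classes $D'_j=\sum_i A_{ij}D_i$ on the same variety $X$. The only difference is that you spell out the reduction to the limit case explicitly, which the paper leaves implicit.
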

\begin{proof}
  Assume $f$ is the volume polynomial associated to  $X,D_1,\ldots D_n$. Then $f(Ax')$ is the volume polynomial associated to $X$ and
  $\sum_i A_{i1}D_i, \ldots, \sum_i A_{im}D_i$.
\end{proof}

\begin{remark}
  \label{rem:volume-diagonalization}
  Lemma~\ref{lem:volume-trafo} shows in particular that if $f(x_1,\ldots,x_n)$ is a volume polynomial, then  the diagonalization \(g(x_1,\ldots,x_{n-1})=f(x_1,\ldots,x_{n-1},x_{n-1})\) is also a volume polynomial. As we will see, to obtain the same result for denormalized volume polynomials requires are more involved argument.
\end{remark}

\begin{lemma}
  \label{lem:volume-product}
  Products of volume polynomials are again volume polynomials. 
\end{lemma}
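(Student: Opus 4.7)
The plan is to realize $f\cdot g$ as (a positive multiple of) a volume polynomial on a product variety. By continuity it suffices to treat the non-limit case: suppose $f(x) = \left(\sum_{i=1}^n x_i D_i\right)^d$ is associated to nef divisors $D_i$ on a projective variety $X$ of dimension $d$, and $g(y) = \left(\sum_{j=1}^m y_j E_j\right)^e$ to nef divisors $E_j$ on a projective variety $Y$ of dimension $e$. I would first assume that the variables of $f$ and $g$ are disjoint; the case where they overlap is then recovered via Lemma~\ref{lem:volume-trafo}.

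On the product $X\times Y$ of dimension $d+e$ the pullbacks $\pi_X^* D_i$ and $\pi_Y^* E_j$ remain nef, so one obtains the associated volume polynomial
$$h(x,y) = \left(\sum_i x_i\, \pi_X^* D_i + \sum_j y_j\, \pi_Y^* E_j\right)^{d+e}.$$
Expanding by the binomial theorem, a summand of bidegree $(k,d+e-k)$ is a top intersection number on $X\times Y$ involving $(\pi_X^* D)$-classes with total exponent $k$ and $(\pi_Y^* E)$-classes with total exponent $d+e-k$. The first factor vanishes in the Chow ring of $X\times Y$ when $k>d$ (as a power of a pullback from $X$ beyond $\dim X$), and by the symmetric argument on $Y$ the second vanishes when $k<d$, so only $k=d$ survives. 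The K\"unneth-type factorization of intersection numbers on a product then gives
$$h(x,y) = \binom{d+e}{d}\, f(x)\, g(y),$$
so $f\cdot g$ is a positive multiple of a volume polynomial, and hence itself a volume polynomial.

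To handle the case where $f$ and $g$ share some variables, I would rename the variables of $g$ to be fresh, apply the disjoint-variable case just established, and then invoke Lemma~\ref{lem:volume-trafo} with the $\{0,1\}$-matrix that re-identifies each fresh variable with its original partner; the required non-negativity of the matrix is immediate. The only substantive ingredient in the whole argument is the K\"unneth vanishing of mixed intersection numbers on a product, which is classical, so I do not expect a serious obstacle.
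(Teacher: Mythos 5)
Your proposal takes essentially the same route as the paper: form the product variety $X\times X'$, pull back the divisor classes, and reduce the shared-variable case to the disjoint-variable case via a non-negative linear change of variables (the paper cites Remark~\ref{rem:volume-diagonalization}, which is itself a special case of Lemma~\ref{lem:volume-trafo}). In fact you are slightly more careful than the paper, which asserts that $f\cdot f'$ \emph{is} the volume polynomial of $X\times X'$ and the pullbacks — strictly speaking it is the positive multiple $\binom{d+e}{d}\, f\cdot f'$, a harmless discrepancy since positive multiples are volume polynomials by definition, and you correctly identify this factor via the K\"unneth vanishing argument.
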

\begin{proof}
Consider volume polynomials $f(x_1,\ldots, x_n)$, $f'(y_1,\ldots,y_m)$ associated to $X$, $D_1,\ldots, D_n$  and $X'$, $D'_1,\ldots,D'_m$, respectively. Assume first that sets of variables are disjoint.  Then $f\cdot f'$ is the volume polynomial associated to $X \times X'$ and $\pi^*D_1,\ldots,\pi^*D_n,(\pi')^*D'_1,\ldots,(\pi')^*D_m'$, where $\pi$ and $\pi'$ are the projections to $X$ and $X'$, respectively. The general case with non disjoint variables then follows from Remark~\ref{rem:volume-diagonalization}.
\end{proof}

\begin{lemma}
  \label{lem:volume-monomial}
  Every monomial is a volume polynomial.
\end{lemma}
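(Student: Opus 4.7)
The plan is to realize any monomial as an intersection product on a product of projective spaces, which is the most natural geometric source of vanishing/nonvanishing intersection numbers.

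Given a multi-index $\alpha \in \NN^n$ with $|\alpha| = d$, I would take $X = \prod_{i :\, \alpha_i > 0} \PP^{\alpha_i}$, so that $\dim X = d$. For each $i$ with $\alpha_i > 0$, let $D_i$ be the pullback of the hyperplane class under the projection onto the $i$-th factor; for any $i$ with $\alpha_i = 0$, set $D_i = 0$. These are nef, so $X$ together with $D_1,\ldots,D_n$ gives an associated volume polynomial $(\sum_i x_i D_i)^d$.

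Expanding via the multinomial theorem,
\[
\left(\sum_i x_i D_i\right)^d \;=\; \sum_{|\beta|=d} \binom{d}{\beta}\, x^\beta \bigl(D_1^{\beta_1}\cdots D_n^{\beta_n}\bigr).
\]
The key observation is that the intersection product $D_1^{\beta_1}\cdots D_n^{\beta_n}$ vanishes unless $\beta_i \le \alpha_i$ for every $i$ — because $D_i^{\beta_i}$ is pulled back from $\PP^{\alpha_i}$ and hence is zero once $\beta_i > \alpha_i$. Combined with the constraint $|\beta|=|\alpha|$, this forces $\beta=\alpha$, and on $X$ the top intersection $D_1^{\alpha_1}\cdots D_n^{\alpha_n}$ equals $1$ by K\"unneth. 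Therefore
\[
\left(\sum_i x_i D_i\right)^d \;=\; \binom{d}{\alpha}\, x^\alpha.
\]
Since $\binom{d}{\alpha}>0$, the monomial $x^\alpha$ is a positive rescaling of a volume polynomial, and so is itself a volume polynomial.

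There is no real obstacle here; the only minor subtlety is allowing the zero divisor for indices with $\alpha_i=0$ (which is nef), or equivalently viewing $x^\alpha$ first as a volume polynomial in the variables $\{x_i : \alpha_i>0\}$ and then extending trivially to the remaining variables.
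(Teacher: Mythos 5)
Your proof is correct and uses essentially the same construction as the paper: the paper realizes $x_1^d$ on $\PP^d$ with a hyperplane class and then invokes Lemma~\ref{lem:volume-product} (products of volume polynomials), which produces exactly your product of projective spaces with pullback hyperplane classes. You simply carry out the resulting K\"unneth/multinomial computation by hand instead of citing the product lemma, and your handling of the scaling factor $\binom{d}{\alpha}$ and of indices with $\alpha_i=0$ is fine under the paper's definition allowing positive multiples and the zero (nef) divisor.
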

\begin{proof}
  The monomial $x_1^d$ is the volume polynomial associated to $\PP^d$ and a hyperplane $H \subset \PP^d$. For general monomials the claim follows by Lemma~\ref{lem:volume-product}
\end{proof}

\begin{proposition}
  \label{prop:volume-schubert}
  Let $s \in \RR[x_1,\ldots,x_n]$ be a Schubert polynomial and $f$ a volume polynomial in $\in \RR[x_1,\ldots,x_n]$. Then $\partial_s(f)$ is again a volume polynomial.
\end{proposition}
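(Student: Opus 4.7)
Assume first that $f$ is the volume polynomial associated to an irreducible projective variety $X$ of dimension $d$ with nef divisors $D_1,\ldots,D_n$, so $f = (\sum_i x_i D_i)^d$; the general case will follow by continuity and the definition of volume polynomial via limits of positive multiples. Setting $k=\deg s$, a direct multinomial expansion gives
\[
  \partial_s(f) \;=\; \frac{d!}{(d-k)!}\, s(D_1,\ldots,D_n)\cdot \Bigl(\sum_i x_i D_i\Bigr)^{d-k},
\]
interpreted as the polynomial in $x_1,\ldots,x_n$ whose coefficient of $x^\gamma$ (for $|\gamma|=d-k$) is $\binom{d-k}{\gamma}\, s(D_1,\ldots,D_n)\cdot D^\gamma \in \RR$. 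It therefore suffices to realize the class $s(D_1,\ldots,D_n) \in A^k(X)_{\RR}$ as a positive multiple of the class of a (generically irreducible) subvariety $Y\subset X$, since then the right-hand side is a positive multiple of the volume polynomial of $Y$ with the restricted divisors $D_i|_Y$.

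The plan is then to invoke the classical interpretation (going back to Fulton and to Kempf--Laksov) of Schubert polynomials as cohomology classes of Schubert degeneracy loci. Picking line bundles $L_i$ on $X$ with $c_1(L_i)=D_i$ (passing to a blowup or cover to arrange integrality if necessary), one forms the full flag bundle $\pi\colon \mathrm{Fl}(L_1\oplus\cdots\oplus L_n)\to X$ together with its tautological filtration, fixes a generic reference subflag, and identifies the class of the relative Schubert variety $\Omega_w$ with $s_w$ evaluated on the Chern roots of the tautological filtration. After transferring back to $X$ (by pushforward through $\pi$, or equivalently by restricting to a section), $s(D_1,\ldots,D_n)$ becomes represented by a degeneracy cycle which is irreducible for sufficiently generic choices, and intersecting with $(\sum x_i D_i)^{d-k}$ then yields the asserted volume polynomial. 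Taking limits handles general nef $D_i$ and the case where $f$ is only a limit of volume polynomials.

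A subtle but essential point is that a naive argument using only the monomial positivity of Schubert polynomials is \emph{not} sufficient. Writing $s = \sum_\alpha c_\alpha x^\alpha$ with $c_\alpha \geq 0$ does express $\partial_s(f) = \sum_\alpha c_\alpha \partial^\alpha f$ as a nonnegative linear combination of volume polynomials on complete intersections, but the class of volume polynomials is not closed under sums: the polynomial $2x_1^2+2x_2^2$ violates the Khovanskii--Teissier inequality (equivalently, the Hodge index theorem on surfaces) and so is not a volume polynomial, even though each of $x_1^2$ and $x_2^2$ individually is. Hence the substantive content of the proposition is geometric: Schubert polynomials are distinguished among monomial-positive polynomials by representing, as a whole, the class of a single (generically irreducible) degeneracy cycle. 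The main obstacle in carrying out the plan is therefore to set this geometric framework up correctly in the presence of arbitrary nef $D_i$, to verify the relevant pushforward identity, and to ensure irreducibility at a generic point of the parameter space of flags.
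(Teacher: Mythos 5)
Your plan is essentially the paper's proof: the paper likewise reduces to $f=\left(\sum_i x_iD_i\right)^d$ and invokes Fulton's degeneracy-locus theorem (Theorem 8.2 of the cited 1992 Fulton paper) to produce an irreducible subvariety $Y$ with $[Y]=s(D_1,\ldots,D_n)$, so that $\partial_s(f)$ is a positive multiple of the volume polynomial of $Y$ with the restricted divisors $D_i|_Y$. Your side remark that monomial positivity of $s$ alone cannot suffice (since sums of volume polynomials, e.g.\ $x_1^2+x_2^2$, need not be volume polynomials) is correct and accurately locates where the geometric content lies.
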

\begin{proof}
  Assume $f$ is the volume polynomial corresponding to the divisor classes $D_1, \ldots, D_n$ on $X$. By \cite[Thm.~8.2]{zbMATH00062989} there is an irreducible subvariety $Y$
  such that for the class $[Y]$ in the Chow ring of $X$ we have $[Y]=s(D_1,\ldots,D_n)$. In particular, we obtain
  \[
    \left(\sum x_i (D_i|_Y) \right)^{d-\dim(Y)}= s(D_1,\ldots,D_n) \cdot \left(\sum x_i D_i \right)^{d-\dim(Y)}=\partial_s(f).
  \]
Hence, $\partial_s(f)$ is the volume polynomial associated to $Y$ and $D_1|_Y, \ldots, D_n|_Y$. %For general volume polynomials the claim follows by taking limits.
\end{proof}

\begin{remark}
  \label{rem:volume-partial-derivatives}
  Since $x_i$ is a Schubert polynomial, Proposition~\ref{prop:volume-schubert} also implies that partial derivatives of volume polynomials are again volume polynomials.
\end{remark}

\begin{lemma}
  \label{lem:rKT}
  Let $f(x_1,\ldots,x_n)=\sum_{|\tau|=d} c_\tau x^\tau$ be a denormalised volume polynomial. Then for any $\alpha,\beta,\gamma$ with $|\alpha| + |\beta| + |\gamma|=d$ we have  
   \begin{equation}  \label{eq:rKT}c_{(\alpha+ \beta+\gamma)} \cdot c_{(\alpha+ |\beta+\gamma|e_1)} \leq \binom{|\beta+\gamma|}{|\beta|} c_{(\alpha + |\gamma|e_1+\beta)}\cdot c_{(\alpha + |\beta|e_1+ \gamma)}\end{equation}
 \end{lemma}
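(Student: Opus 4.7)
The plan is to reduce \eqref{eq:rKT} to the classical reverse Khovanskii--Teissier inequality for intersection numbers on a projective variety, which is exactly the content of the references cited immediately above the lemma.

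First I would unwind the definition. Since $f$ is a denormalized volume polynomial, $N(f)=\sum_\tau (c_\tau/\tau!)x^\tau$ is a limit of polynomials of the form $\left(\sum_i x_iD_i\right)^d$ for nef divisor classes $D_1,\ldots,D_n$ on a projective variety $X$ of dimension $d$. Comparing coefficients, $c_\tau$ is a limit of quantities $d!\cdot D^\tau$ where $D^\tau := D_1^{\tau_1}\cdots D_n^{\tau_n}$ denotes the intersection product. Because both sides of \eqref{eq:rKT} are polynomials in the $c_\tau$, it suffices by continuity to handle the case $c_\tau=d!\cdot D^\tau$, and after dividing by $(d!)^2$ the claim reduces to the intersection-theoretic inequality
\[
(D^{\alpha+\beta+\gamma})(D^{\alpha}\cdot D_1^{|\beta|+|\gamma|}) \le \binom{|\beta|+|\gamma|}{|\beta|}(D^{\alpha+\beta}\cdot D_1^{|\gamma|})(D^{\alpha+\gamma}\cdot D_1^{|\beta|}).
\]

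Next I would invoke the reverse Khovanskii--Teissier inequality from \cite{zbMATH07745002,hu2023intersection}. In its standard form it states that for nef classes $E_1,\ldots,E_r$, $F_1,\ldots,F_s$, $G_1,\ldots,G_t$ with $r+s+t=d$ and any nef class $H$,
\[
(E_1\cdots E_r\, F_1\cdots F_s\, G_1\cdots G_t)(H^{r+s}\, G_1\cdots G_t) \le \binom{r+s}{r}(E_1\cdots E_r\, H^{s}\, G_1\cdots G_t)(F_1\cdots F_s\, H^{r}\, G_1\cdots G_t).
\]
Specializing the $G_i$ to the divisors encoded by $\alpha$ (so $t=|\alpha|$), the $E_j$ to those encoded by $\beta$ (so $r=|\beta|$), the $F_k$ to those encoded by $\gamma$ (so $s=|\gamma|$), and $H=D_1$, one recovers exactly the displayed intersection inequality.

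The main obstacle is essentially bookkeeping: matching the multi-index form of \eqref{eq:rKT} against the grouping of factors in a statement of the reverse KT inequality as it appears in the cited papers, and checking that the binomial $\binom{|\beta|+|\gamma|}{|\beta|}$ is the right one. A minor technicality is that cited forms sometimes require $H$ to be ample rather than merely nef; this is handled by the standard perturbation $D_1\rightsquigarrow D_1+\varepsilon A$ with $A$ ample, followed by $\varepsilon\to 0$, which is justified by continuity of intersection numbers and of the inequality in the $c_\tau$.
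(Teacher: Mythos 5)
Your proposal is correct in outline and outsources the real content to the same place the paper does: the reverse Khovanskii--Teissier inequality for intersection numbers of nef classes from \cite{zbMATH07745002,hu2023intersection}. The reduction to genuine volume polynomials $\left(\sum_i x_i D_i\right)^d$ via homogeneity and continuity, and the bookkeeping identifying $c_\tau$ with $d!\,D^\tau$, are both fine. The one substantive difference is how the block $\alpha$ is handled. You absorb it into a slot of ``fixed'' nef classes $G_1,\ldots,G_t$ appearing in all four intersection numbers, which requires the cited reverse KT inequality to be available in that stronger form (top-degree intersections on $X$ itself, with a common nef factor $G_1\cdots G_t$ of codimension $|\alpha|$). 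The paper instead first passes to $g=\partial^\alpha N(f)$, which is again a volume polynomial by Remark~\ref{rem:volume-partial-derivatives} (a nontrivial closure property, proved via Proposition~\ref{prop:volume-schubert}), and is a limit of polynomials $(x_1A+\sum_{i\ge 2}x_iB_i)^{|\beta+\gamma|}$ coming from varieties of dimension $|\beta|+|\gamma|$; it then applies Theorem~3.5 of \cite{zbMATH07745002} in its ``clean'' form with no fixed classes. The two reductions buy the same thing, but the paper's version only needs the weaker form of the cited theorem, and the differentiation step is exactly the device that replaces your fixed-class slot.

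The only point you should nail down is therefore the precise statement of the theorem you are quoting: as written, your ``standard form'' with the $G_i$'s is stronger than what the paper actually invokes, and one cannot in general replace $G_1\cdots G_t$ by an honest subvariety when the classes are merely nef (Bertini-type arguments need very ample classes). If the fixed-class version is indeed proved in \cite{hu2023intersection} you are done; otherwise, insert the observation that $\partial^\alpha$ applied to a volume polynomial is again a volume polynomial and apply the fixed-class-free statement to it, which is a one-line repair. Your $\varepsilon$-perturbation remark for ample versus nef $H$ is correct and unproblematic.
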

 \begin{proof}
   By assumption $N(f)$ and by Remark~\ref{rem:volume-partial-derivatives} also $g=\partial^\alpha N(f)$ are volume polynomials.   We write the latter in the form $(x_1A + \sum_{i\geq 2} x_i B_i)^{|\beta+\gamma|}$. Then we obtain
   \[N^{-1}(g)=\sum_\lambda c'_\lambda x^\lambda=\sum_\lambda c_{(\alpha+\lambda)} x^\lambda\]
   with $c'_\lambda$ being equal to the intersection number $(A^{\lambda_1}\cdot B_2^{\lambda_2}\cdots B_n^{\lambda_n})$. Theorem~3.5 in \cite{zbMATH07745002} then takes the form
   \[
     c'_{|\beta+\gamma|e_1}c'_{\beta+\gamma} \leq \binom{|\beta+\gamma|}{|\beta|}c'_{(|\beta|e_1+\gamma)}c'_{(|\gamma|e_1+\beta)}.
   \]
  which is precisely \eqref{eq:rKT}.
  %   \[c_{(\alpha+ \beta+\gamma)} \cdot c_{(\alpha+ |\beta+\gamma|e_1)} \leq \binom{|\beta+\gamma|}{|\beta|} c_{(\alpha + |\gamma|e_1+\beta)}\cdot c_{(\alpha + |\beta|e_1+ \gamma)},\]
%   respectively.
 \end{proof}

 % \begin{remark}
 %   \label{rem:trivial}
 %   Trivially for every operator $F\colon \RR[x_1\ldots,x_n] \to \RR[x_1\ldots,x_n]$ preserving the property of being a volume polynomial we automatically obtain an operator $N^{-1}\circ F \circ N$ preserving the property of being a denormalized volume polynomial. For example the operator $D^\alpha$ which is defined by linear extension via
 %   \[D^\alpha(x^\beta) =
 %     \begin{cases}
 %       x^{\beta-\alpha} & \beta \geq \alpha,\\
 %       0 & \text{otherwise.}
 %     \end{cases}
 %   \]
 %   preserves the property of being a denormalized volume polynomial, since  $D^\alpha=N^{-1} \circ \partial^\alpha \circ N$.

 %   % Conversely every operator $F$ preserving the property of being a denormalized volume polynomial induces one $N \circ F \circ N^{-1}$, which preserves the property of being a volume polynomial.
 % \end{remark}

 We finish this section with a few simple corollaries of our main theorem providung us with more operators which preserve the property of being a (denormalized) volume polynomial.
 \begin{corollary}
  \label{cor:denorm-product}
  The product of two denormalized volume polynomials is again a denormalized volume polynomial.
\end{corollary}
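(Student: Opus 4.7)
The plan is to first handle the case of disjoint variables, where the statement reduces cleanly to Lemma~\ref{lem:volume-product}, and then bootstrap to the general case using Theorem~\ref{thm:diagonalization}.

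For the disjoint case, suppose $f(x_1,\ldots,x_n)$ and $g(y_1,\ldots,y_m)$ are denormalized volume polynomials in disjoint variable sets. Writing $f = \sum_\alpha c_\alpha x^\alpha$ and $g = \sum_\beta d_\beta y^\beta$, the product is $fg = \sum_{\alpha,\beta} c_\alpha d_\beta \, x^\alpha y^\beta$, and since $(\alpha,\beta)! = \alpha!\,\beta!$ when $\alpha$ and $\beta$ index disjoint blocks, we get $N(fg) = N(f)\cdot N(g)$. By hypothesis $N(f)$ and $N(g)$ are volume polynomials, so their product is a volume polynomial by Lemma~\ref{lem:volume-product}, proving that $fg$ is a denormalized volume polynomial.

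For the general case, let $f,g \in \RR[x_1,\ldots,x_n]$ and relabel the variables of $g$ to obtain $\tilde g(y_1,\ldots,y_n)$ in a fresh set of variables. By the disjoint case, $f(x)\tilde g(y) \in \RR[x_1,\ldots,x_n,y_1,\ldots,y_n]$ is a denormalized volume polynomial. Now apply Theorem~\ref{thm:diagonalization} iteratively, at the $i$-th step taking the role of the diagonalized pair $(x_1,x_2)$ in the theorem to be $(x_i,y_i)$ and the role of the $v$'s to be all remaining variables, replacing both occurrences by a common variable which we then rename $x_i$. After $n$ such diagonalizations we obtain $f(x)g(x)$, which is therefore a denormalized volume polynomial.

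No real obstacle appears once Theorem~\ref{thm:diagonalization} is available: the identity $N(fg) = N(f)N(g)$ for disjoint variables is immediate, and the reduction of the general case is just repeated diagonalization. The whole content of the corollary is packaged inside the diagonalization theorem together with the already-established Lemma~\ref{lem:volume-product}.
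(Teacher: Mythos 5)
Your proof is correct and follows essentially the same route as the paper: reduce to disjoint variable sets via $N(fg)=N(f)N(g)$ and Lemma~\ref{lem:volume-product}, then recover the general case by diagonalization via Theorem~\ref{thm:diagonalization}. You merely spell out the iterated pairwise identification of variables that the paper leaves implicit.
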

\begin{proof}
  We first consider denormalized volume polynomials $f$ und $g$ with a disjoint set of variables.
  Then Lemma~\ref{lem:volume-product} implies that $N(f)N(g)$ is a volume polynomial. As the variables are disjoint, $N^{-1}(N(f)N(g))=fg$ and so $fg$ is a denormalized volume polynomial.
  The general case without disjoint variables follows from Theorem~\ref{thm:diagonalization}.
\end{proof}
%\todo{Question: what about elementary splitting $f(x_1,\ldots,x_n+x_{n+1})$ and therefore linear transformations?}

\begin{corollary}
  \label{cor:vol-antiderivative}
  Given a volume polynomial $f=\sum_\alpha c_\alpha x^\alpha$  the antiderivative $$\int^\gamma f :=\sum_\alpha \frac{\alpha!}{(\alpha+\gamma)!}c_\alpha x^{\alpha+\gamma}$$ is again a volume polynomial.
\end{corollary}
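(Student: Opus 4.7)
The plan is to exploit the preceding Corollary~\ref{cor:denorm-product} (products of denormalized volume polynomials are denormalized) by rewriting the antiderivative $\int^\gamma f$ as a product after denormalizing.

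Concretely, I would first verify the algebraic identity
\[
  N^{-1}\!\left(\int^\gamma f\right) \;=\; x^\gamma \cdot N^{-1}(f).
\]
This is a short coefficient comparison: the coefficient of $x^\beta$ on the left is $\beta! \cdot \tfrac{(\beta-\gamma)!}{\beta!}\, c_{\beta-\gamma} = (\beta-\gamma)!\, c_{\beta-\gamma}$ (for $\beta\ge\gamma$, and zero otherwise), which matches the coefficient $(\beta-\gamma)!\, c_{\beta-\gamma}$ of $x^\beta = x^\gamma\cdot x^{\beta-\gamma}$ coming from $x^\gamma \cdot N^{-1}(f)$ on the right.

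Each factor on the right-hand side is a denormalized volume polynomial: $N^{-1}(f)$ is so by definition, since $N(N^{-1}(f)) = f$ is a volume polynomial by hypothesis; and the monomial $x^\gamma$ is so because $N(x^\gamma) = \tfrac{1}{\gamma!} x^\gamma$ is a positive scalar multiple of a monomial and hence a volume polynomial by Lemma~\ref{lem:volume-monomial}. Applying Corollary~\ref{cor:denorm-product} therefore shows $N^{-1}(\int^\gamma f)$ is a denormalized volume polynomial, which, unpacking the definition once more, means $\int^\gamma f$ itself is a volume polynomial. The only real input beyond routine coefficient bookkeeping is the product-closure Corollary~\ref{cor:denorm-product} (which in turn rests on Theorem~\ref{thm:diagonalization}), so no serious obstacle is anticipated.
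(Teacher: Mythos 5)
Your proposal is correct and is essentially identical to the paper's proof: the paper also uses the identity $\int^\gamma f = N\bigl(x^\gamma\, N^{-1}(f)\bigr)$ and then invokes Corollary~\ref{cor:denorm-product}. You merely make explicit the coefficient check and the verification that both factors are denormalized volume polynomials, which the paper leaves implicit.
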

\begin{proof}
  We have $\int^\gamma f=N(x^\gamma N^{-1}(f))$ and the claim follows from Corollary~\ref{cor:denorm-product}.
\end{proof}

For an exponent vector $\gamma$ we consider the lower truncation operator which sends $f=\sum_\alpha c_\alpha x^\alpha$ to $f_{\geqslant \gamma}:= \sum_{\alpha \geq \gamma} c_\alpha x^\alpha$.
\begin{corollary}\label{cor:truncation}
  For a (denormalized) volume polynomial $f$ the lower truncation $f_{\geqslant \gamma}$ is again a (denormalized) volume polynomial.
\end{corollary}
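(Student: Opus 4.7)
The plan is to express the lower truncation as a composition of two operators that we already know preserve being a volume polynomial, namely the iterated partial derivative $\partial^\gamma = \prod_i \partial_{x_i}^{\gamma_i}$ and the antiderivative $\int^\gamma$ of Corollary~\ref{cor:vol-antiderivative}. Concretely, I will establish the purely formal identity
\[
  f_{\geqslant \gamma} \;=\; \int^\gamma \partial^\gamma f
\]
for every homogeneous polynomial $f = \sum_\alpha c_\alpha x^\alpha$. The computation is immediate: writing $\alpha = \beta + \gamma$, one has $\partial^\gamma f = \sum_\beta \frac{(\beta+\gamma)!}{\beta!}\, c_{\beta+\gamma}\, x^\beta$, and applying the antiderivative then contributes a factor $\frac{\beta!}{(\beta+\gamma)!}$ and reindexes monomials to $x^{\beta+\gamma}$, so the two prefactors cancel and only the sum over $\alpha \ge \gamma$ survives.

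With this identity in hand, if $f$ is a volume polynomial then Remark~\ref{rem:volume-partial-derivatives} (iterated) shows that $\partial^\gamma f$ is again a volume polynomial, and Corollary~\ref{cor:vol-antiderivative} shows that $\int^\gamma \partial^\gamma f = f_{\geqslant \gamma}$ is a volume polynomial. This settles the volume polynomial half of the statement.

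For the denormalized case, I will use that the normalization operator $N$ commutes with lower truncation, i.e.\ $N(f_{\geqslant \gamma}) = \sum_{\alpha \ge \gamma} \frac{c_\alpha}{\alpha!} x^\alpha = N(f)_{\geqslant \gamma}$, because truncation only selects monomials and does not combine them. Thus if $f$ is a denormalized volume polynomial then $N(f)$ is a volume polynomial, so by the preceding paragraph $N(f)_{\geqslant \gamma} = N(f_{\geqslant \gamma})$ is a volume polynomial, meaning $f_{\geqslant \gamma}$ is a denormalized volume polynomial.

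There is no real obstacle here: the substantive work is done by the earlier results (closure of volume polynomials under partial derivatives and under antidifferentiation, with the latter depending on the diagonalization theorem through Corollary~\ref{cor:denorm-product}). The only thing to be careful about is the bookkeeping of factorials in verifying the identity $\int^\gamma \partial^\gamma f = f_{\geqslant \gamma}$ and the elementary observation that $N$ commutes with truncation, both of which are direct.
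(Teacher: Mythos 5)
Your proposal is correct and follows essentially the same route as the paper: the key identity $f_{\geqslant \gamma} = \int^\gamma \partial^\gamma f$ together with Corollary~\ref{cor:vol-antiderivative}, and then commuting $N$ with truncation (the paper writes this as $f_{\geqslant \gamma}= N^{-1}(N(f)_{\geqslant \gamma})$) for the denormalized case. The only difference is that you spell out the factorial bookkeeping explicitly, which the paper leaves implicit.
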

\begin{proof}
  Assume that $f$ is a volume polynomial. Then $f_{\geqslant \gamma}= \int^\gamma \partial^\gamma f$ is also a volume polynomial by Corollary~\ref{cor:vol-antiderivative}. On the other hand, we have $f_{\geqslant \gamma}= N^{-1}(N(f)_{\geqslant \gamma})$. Hence, the corresponding result for denormalized volume polynomials follows, as well.
\end{proof}
%\todo{What about upper truncations?}
 
\section{Proof of the Main Theorem}
Here we apply the tools from the previous section to show the diagonalizations of denormalized volume polynomials are again denormalized volume polynomials.

\begin{lemma}
\label{lem:technical-derived}
  Consider a homogeneneous polynomial $p \in \RR[v_1,\ldots,v_{k},x_1,\ldots,x_m]$ of degree $d$ and polynomials $p_\iota \in \RR[x_1,\ldots,x_m]$ with $p=\sum_\iota p_\iota(x_1,\ldots,x_m)v^\iota$.  For  any homogeneous polynomial $s \in \RR[x_1,\ldots,x_m]$ of degree $a$ the identity   \[\partial_s|_{x=0} \bigg(p \cdot \Big(u + \sum_{i=1}^m x_i\Big)^{a}\bigg)=\sum_{|\iota|\leq d} \frac{a!}{(d-|\iota|)!}\partial_{s^{(|\iota|+a-d)}}p_\iota \cdot v^\iota u^{d-|\iota|}\]  
  holds.
\end{lemma}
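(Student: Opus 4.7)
The plan is to exploit the linearity of every operation in the identity: the assignments $s\mapsto\partial_s$, $s\mapsto s^{(j)}$, and $s\mapsto\partial_{s^{(j)}}$ are all linear in $s$, and both sides are additive in $p$. So it suffices to verify the formula when $s = x^\alpha$ is a single monomial of degree $a = |\alpha|$ and $p = p_\iota(x)\,v^\iota$ for a single multi-index $\iota$, with $p_\iota$ homogeneous of degree $d-|\iota|$. Since $\partial_s$ only differentiates in $x$, the factor $v^\iota$ passes through the left-hand side untouched.

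In this reduced setting the identity becomes a direct application of the multivariate Leibniz rule:
\[
\partial^\alpha\bigl(p_\iota\cdot (u+\textstyle\sum_{i=1}^m x_i)^{a}\bigr) = \sum_{\beta\le\alpha}\binom{\alpha}{\beta}\,\partial^\beta p_\iota \cdot \partial^{\alpha-\beta}(u+\textstyle\sum_{i=1}^m x_i)^{a}.
\]
A short calculation (using $a = |\alpha|$) gives $\partial^{\alpha-\beta}(u+\sum x_i)^{a} = \tfrac{a!}{|\beta|!}(u+\sum x_i)^{|\beta|}$, which becomes $\tfrac{a!}{|\beta|!}\,u^{|\beta|}$ after setting $x=0$. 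Because $p_\iota$ is homogeneous of degree $d-|\iota|$, the factor $\partial^\beta p_\iota|_{x=0}$ vanishes unless $|\beta| = d-|\iota|$, so only those terms survive and the surviving contribution is
\[
\frac{a!}{(d-|\iota|)!}\biggl(\sum_{\substack{\beta\le\alpha\\|\beta|=d-|\iota|}}\binom{\alpha}{\beta}\,\partial^\beta p_\iota\biggr) u^{d-|\iota|}.
\]

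To conclude, I would identify the inner sum with $\partial_{s^{(|\iota|+a-d)}}p_\iota$ by unpacking the definition of the derived polynomial for $s = x^\alpha$. Expanding $(x_1+u)^{\alpha_1}\cdots(x_m+u)^{\alpha_m} = \sum_{\beta}\binom{\alpha}{\beta}\,x^\beta u^{a-|\beta|}$ yields $s^{(j)}(x) = \sum_{|\beta|=a-j}\binom{\alpha}{\beta}x^\beta$, and substituting $j = |\iota|+a-d$ matches the coefficient pattern above exactly. I do not expect a serious obstacle here; the only care point is the degenerate range $|\iota|+a-d<0$, in which $s^{(|\iota|+a-d)}=0$ by convention and no $\beta\le\alpha$ satisfies $|\beta|=d-|\iota|>a$, so both sides vanish consistently. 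The whole argument is thus essentially Leibniz plus a one-line recognition, with the real work being the bookkeeping that isolates which $\beta$ survive.
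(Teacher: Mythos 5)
Your proof is correct. You and the paper both reduce by linearity to monomial $s=x^\alpha$ and then compute both sides directly, but the computational routes differ: the paper also reduces $p_\iota$ to a monomial $x^\beta$, expands $(u+\sum_i x_i)^a$ by the multinomial theorem, and extracts the unique surviving term, whereas you keep $p_\iota$ a general homogeneous polynomial and instead apply the multivariate Leibniz rule, using $\partial^{\alpha-\beta}(u+\sum_i x_i)^a = \tfrac{a!}{|\beta|!}(u+\sum_i x_i)^{|\beta|}$ and the degree count on $p_\iota$ to isolate the terms with $|\beta|=d-|\iota|$. The Leibniz organization is slightly cleaner: it avoids the double monomial reduction and the multinomial coefficient bookkeeping, and it makes the mechanism (only the order-$(d-|\iota|)$ derivatives of $p_\iota$ survive at $x=0$) more transparent. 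The final identification of $\sum_{|\beta|=d-|\iota|}\binom{\alpha}{\beta}\partial^\beta$ with $\partial_{s^{(|\iota|+a-d)}}$ via the definition of the derived polynomial is exactly the same step the paper carries out in its equation (\ref{eq:derived-of-monomial}), including your handling of the degenerate range $|\iota|+a-d<0$. Both proofs are complete and essentially equivalent in content.
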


Note, that here the term $\partial_{s^{(|\iota|+a-d)}}(p_\iota)$ has degree $0$ and is therefore just a (non-negative) real number.

\begin{proof}
  It is sufficient to prove the statement for monomials $s=x^\alpha$ and $p=x^\beta v^\iota$, as the general case follows by linearity.
  First consider
  \begin{equation*}
  (x_1+u)^{\alpha_1}\cdots (x_m+u)^{\alpha_m}
    \sum_{|\beta|+i=|\alpha|}\binom{\alpha}{\beta} x^\beta u^i.
\end{equation*}
By definition, $s^{(|\iota|+a-d)} =  \sum_{|\beta|=d-|\iota|}\binom{\alpha}{\beta} x^\beta$ and therefore
\begin{equation}
\partial_{s^{(|\iota|+a-d)}}(p_\iota) =\partial_{s^{(|\iota|+a-d)}}(x^\beta)=\beta!\binom{\alpha}{\beta}.
    \label{eq:derived-of-monomial} 
\end{equation}
 As usual, we consider $\binom{\alpha}{\beta}=0$ if it is not the case that $\beta\leq \alpha$.

  On the other hand, we obtain
  \begin{align*}
    \partial_s|_{x=0} \Big(p \cdot \big(u + \sum_{i=1}^m  x_i \big)^{a}\Big) &=
    \partial^\alpha|_{x=0} \left(x^\beta v^\iota\cdot \left( u+ \sum_{i=1}^{m}  x_i \right)^{a}\right) \\
                                                                                                   &=     \partial^\alpha|_{x=0} \left(x^\beta v^\iota \sum_{j=0}^{a} \binom{a}{j}\left(\sum_{i=1}^m x_i\right)^{a-j}u^j\right)\\
                                                                   &=\partial^\alpha|_{x=0} \left(x^\beta v^\iota \sum_{j=0}^{a} \binom{a}{j}\left(\sum_{|\gamma|=a-j} \binom{a-j}{\gamma_1,\ldots,\gamma_m} x^\gamma \right)u^j\right)\\
                                                                   &=\partial^\alpha|_{x=0}\left(\binom{a}{d-|\iota|} \binom{a-d+|\iota|}{ \alpha_1-\beta_1,\ldots,\alpha_m-\beta_m} x^{\alpha}\cdot v^\iota u^{d-|\iota|}\right)\\
                                                                   &=\alpha!\binom{a}{d-|\iota|} \binom{a-d+|\iota|}{ \alpha_1-\beta_1,\ldots,\alpha_m-\beta_m} \cdot v^\iota u^{d-|\iota|}\\
                                                                   &=\frac{a!}{(d-|\iota|)!}\cdot \beta! \binom{\alpha}{\beta}\cdot v^\iota u^{d-|\iota|}.
  \end{align*}
where the fourth equality uses that all terms vanish unless $\beta+ \gamma=\alpha$ which forces $|\gamma|=a-j = |\alpha|-|\beta|$ so $j = |\beta| = d-|\iota|$. Comparing with (\ref{eq:derived-of-monomial}) shows the claim.
\end{proof}

Given a polynomial $f=\sum_\beta c_\beta x^\beta$ in $\RR[x_1,\ldots,x_m]$
and an exponent vector $\alpha \in \NN^m$, we define the \emph{upper truncation} $$f_{\leqslant \alpha} =\sum_{\beta\le \alpha} c_\beta x^\beta.$$ Then the truncation $f_{\leqslant \underline{\mathbf{1}}}$ is called the multiaffine part of $f$. The following is an analog of \cite[Corollary 3.5]{BrandenHuh} for (denormalized) volume polynomials.
\begin{proposition}
  \label{prop:weighted-truncation}
  For $f=\sum_\beta c_\beta v^\beta  \in \RR[v_1,\ldots,v_n]$ a (denormalized) volume polynomial and $\alpha \in \NN^n$ the polynomial $\sum_{\beta}\binom{\alpha}{\beta} c_\beta v^\beta$ is again a (denormalized) volume polynomial. In particular, the multiaffine part $f_{\leqslant \underline{\mathbf{1}}}$ is again a (denormalized) volume polynomial.
\end{proposition}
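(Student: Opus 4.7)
The plan is to realise the weighted truncation as an explicit coefficient of an auxiliary volume polynomial in enlarged variables. I would introduce
\[
Q(v,z) := \prod_{i=1}^{n}(v_i+z_i)^{\alpha_i}\cdot f(z)
\]
in the $2n$ variables $v_1,\ldots,v_n,z_1,\ldots,z_n$. Each factor $(v_i+z_i)^{\alpha_i}$ is a volume polynomial associated to $\PP^{\alpha_i}$ (both $v_i$ and $z_i$ mapping to the hyperplane class $h$, since then $(v_ih+z_ih)^{\alpha_i}=(v_i+z_i)^{\alpha_i}h^{\alpha_i}=(v_i+z_i)^{\alpha_i}$); combining these factors with $f(z)$, which is volume in the $z_i$'s by assumption, Lemma~\ref{lem:volume-product} makes $Q$ into a volume polynomial in $(v,z)$.

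Next, I would expand
\[
\prod_i(v_i+z_i)^{\alpha_i}=\sum_\beta\binom{\alpha}{\beta}\,v^\beta z^{\alpha-\beta}
\]
and pair this with $f(z)=\sum_\gamma c_\gamma z^\gamma$. The only contribution to the monomial $v^\beta z^\alpha$ in $Q$ comes from $\gamma=\beta$, giving coefficient $\binom{\alpha}{\beta}c_\beta$, and hence
\[
\frac{1}{\alpha!}\,\partial_z^{\alpha} Q(v,z)\Big|_{z=0} = \sum_\beta\binom{\alpha}{\beta}\,c_\beta\,v^\beta,
\]
which is precisely the weighted truncation I wish to produce.

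Both operations on the left-hand side preserve being a volume polynomial: iterated partial differentiation in the $z_i$'s is handled by Remark~\ref{rem:volume-partial-derivatives}, and setting $z=0$ is the non-negative substitution from Lemma~\ref{lem:volume-trafo}. Scaling by the positive constant $1/\alpha!$ is harmless. This settles the volume polynomial version of the proposition.

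For the denormalised statement, observe that $N$ commutes with the weighted truncation in the sense that $N\bigl(\sum_\beta\binom{\alpha}{\beta}c_\beta v^\beta\bigr)=\sum_\beta\binom{\alpha}{\beta}(c_\beta/\beta!)v^\beta$, so applying the volume-polynomial case just proved to the volume polynomial $N(f)$ shows that $N$ of the weighted truncation of $f$ is a volume polynomial, i.e.\ the weighted truncation of $f$ is a denormalised volume polynomial. The multiaffine statement is the special case $\alpha=\underline{\mathbf{1}}$. The main obstacle is really just the first step, where $\prod_i(v_i+z_i)^{\alpha_i}$ and $f(z)$ share the $z_i$ variables; this is why it is important to invoke Lemma~\ref{lem:volume-product} in its general form (which internally uses Remark~\ref{rem:volume-diagonalization}).
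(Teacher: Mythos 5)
Your proof is correct and uses essentially the same mechanism as the paper's: your operation $\tfrac{1}{\alpha!}\partial_z^{\alpha}\bigl(\prod_i(v_i+z_i)^{\alpha_i}f(z)\bigr)\big|_{z=0}$ is exactly the paper's $\tfrac{1}{a!}\partial_{v_n}^{a}\bigl((u+v_n)^{a}f\bigr)\big|_{v_n=0}$ carried out for all variables simultaneously instead of one at a time, and the positivity-preserving steps you invoke (Lemma~\ref{lem:volume-product} for the product with shared variables, Remark~\ref{rem:volume-partial-derivatives} for differentiation, Lemma~\ref{lem:volume-trafo} for setting $z=0$) are precisely the ones the paper uses. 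The only difference is that you extract the coefficient by a direct two-line computation rather than routing it through Lemma~\ref{lem:technical-derived}, which is a harmless (arguably cleaner) simplification.
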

\begin{proof}
  By repeatidly applying the statement, it is enough to prove that for $a \in \NN$ the polynomial $\sum_{\beta}\binom{a}{\beta_n} c_\beta v^\beta$ is again a (denormalized) volume polynomial.  We apply Lemma~\ref{lem:technical-derived}  to $p=f$ for $k=n-1$, $m=1$, $x_1=v_n$ and $s=v^{a}_n$.  So with this notation $p_\iota = p_\iota(v_n) = c_{\iota,b} v_n^{d-|\iota|}$.   Next note that
  \[\partial_{s^{(|\iota|+a-d)}}(v^{d-|\iota|}_n)=(d-|\iota|)!\binom{a}{d-|\iota|}\]
  and with our notation Lemma~\ref{lem:technical-derived}  gives
  \begin{equation}
    \label{eq:weighted-truncation}
    \frac{1}{a!}\cdot \partial_s|_{v_n=0} \left(p \cdot (u + v_{n})^{a}\right)=
    \sum_{|\iota|+b=d} \binom{a}{b} c_{\iota,b} \cdot v^\iota u^b
  \end{equation}
  with the usual convention that $\binom{a}{b} = 0$ for $b>a$. Since the left-hand-side of (\ref{eq:weighted-truncation}) is a volume polynomial by Proposition~\ref{prop:volume-schubert} and Lemma~\ref{lem:volume-trafo},  the claim for volume polynomials follows.

  Since the considered operators commute with normalization, we also get the denormalized version of the result.
\end{proof}

As a consequence we obtain the following analog of \cite[Corollary~3.7]{BrandenHuh} for (denormalized) volume polynomials.
\begin{corollary}\label{cor:normalizationpreservesvolume}
  The normalization $N(f)$ of a (denormalized) volume polynomial is again a (denormalized) volume polynomial. 
\end{corollary}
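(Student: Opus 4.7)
The plan is to obtain $N(f)$ as a limit of polynomials produced by Proposition~\ref{prop:weighted-truncation}. Writing $f = \sum_{|\beta|=d} c_\beta v^\beta$ in $n$ variables, for each positive integer $\lambda$ I would apply Proposition~\ref{prop:weighted-truncation} with the weight vector $\alpha = (\lambda,\lambda,\ldots,\lambda) \in \NN^n$. This yields that
\[g_\lambda := \sum_\beta \Big(\prod_{i=1}^n \binom{\lambda}{\beta_i}\Big)\, c_\beta\, v^\beta\]
is again a (denormalized) volume polynomial, and hence so is the positive rescaling $\lambda^{-d} g_\lambda$.

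Since $|\beta| = d$ in every nonzero term, I would then rewrite
\[\lambda^{-d} g_\lambda = \sum_\beta \Big(\prod_{i=1}^n \frac{\binom{\lambda}{\beta_i}}{\lambda^{\beta_i}}\Big) c_\beta v^\beta\]
and invoke the elementary asymptotic $\binom{\lambda}{k}/\lambda^k \to 1/k!$ as $\lambda \to \infty$ for each fixed $k$. Because only finitely many monomials contribute, the coefficients converge to $c_\beta/\beta!$, so $\lambda^{-d} g_\lambda \to N(f)$ coefficientwise. As (denormalized) volume polynomials are closed under coefficientwise limits, this gives the result.

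The only piece of bookkeeping worth spelling out is the closure-under-limits property in the denormalized setting: if $h_k \to h$ with each $h_k$ denormalized volume, then $N(h_k) \to N(h)$ coefficientwise (since $N$ merely rescales each coefficient on polynomials of fixed degree), each $N(h_k)$ is a volume polynomial, so $N(h)$ is a volume polynomial by closure in the volume case, and hence $h$ is denormalized volume. Beyond this I do not anticipate any real obstacle; the substantive content is simply the recognition that the family of scaled weighted-truncation operators from Proposition~\ref{prop:weighted-truncation} approximates the normalization operator $N$ in the $\lambda \to \infty$ limit.
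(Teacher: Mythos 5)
Your argument is correct and is essentially identical to the paper's own proof: the paper likewise applies Proposition~\ref{prop:weighted-truncation} with the constant weight vector $\underline{\mathbf a}$ to the rescaled polynomial $a^{-d}f$ and recovers $N(f)$ as the limit $a\to\infty$ using $\binom{a}{k}a^{-k}\to 1/k!$. Your extra remark on closure under coefficientwise limits in the denormalized setting is a correct piece of bookkeeping that the paper leaves implicit.
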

\begin{proof} Consider $f=\sum_\beta c_\beta x^\beta  \in \RR[x_1,\ldots,x_n]$.  We apply Proposition~\ref{prop:weighted-truncation} to the polynomial $a^{-d}\cdot f(x_1, \ldots,x_n)$ and $\alpha=\underline{\textbf{a}}$. Then we conclude that
  \[N(f)=\lim_{a\to \infty} \sum_{\beta} \binom{\underline{\mathbf{a}}}{\beta} a^{-d} c_\beta x^\beta\]
  is again a volume polynomial.
\end{proof}

% \begin{proposition}
%   If $p \in \RR[v,x_1,\ldots,x_m]$ is a Lorentzian polynomial of degree $d$ and $s \in  \RR[x_1,\ldots,x_m]$ is dually Lorentzian of degree $d+\ell$ the
%   sequence \[i \mapsto i!\partial_{s^{(i+\ell)}} p_i\] is log concave.
% \end{proposition}

\begin{proposition}
  \label{prop:general-kahn-saks}
  For $\ell >0$ and $p = \sum_\iota p_\iota v^\iota \in \RR[v_1,\ldots,v_{k}, x_j \mid j \geq 1]$ being a volume polynomial, the polynomial
  \begin{equation}
      q(v_1,\ldots,v_{k},u)=\sum_{|\iota| \leq d} \frac{N^{-1}(p_\iota)(u,\ldots,u)}{(d-|\iota|)!}\cdot v^\iota \label{eq:volume-poly}
  \end{equation}
is a volume polynomial, as well.
\end{proposition}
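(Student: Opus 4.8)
The plan is to realize the polynomial $q$ directly as $\partial_s|_{x=0}$ applied to a volume polynomial, invoking Lemma~\ref{lem:technical-derived} with $s$ chosen to be the denormalization of a suitable power sum. Concretely, since $p$ is a volume polynomial in the variables $v_1,\dots,v_k,x_1,\dots,x_m$ (say with $m$ of the $x_j$'s actually appearing), write $p = \sum_\iota p_\iota(x)v^\iota$ as in the lemma. I want the right-hand side of the identity in Lemma~\ref{lem:technical-derived}, namely $\sum_{|\iota|\le d}\frac{a!}{(d-|\iota|)!}\partial_{s^{(|\iota|+a-d)}}p_\iota\cdot v^\iota u^{d-|\iota|}$, to match (up to a positive constant) the target $q = \sum_{|\iota|\le d}\frac{N^{-1}(p_\iota)(u,\dots,u)}{(d-|\iota|)!}v^\iota$. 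Comparing the two, I need $\partial_{s^{(|\iota|+a-d)}}(p_\iota)$ to reproduce $N^{-1}(p_\iota)(u,\dots,u)$ after the substitution $u^{d-|\iota|}$ is reincorporated — i.e. the degree-zero number $\partial_{s^{(|\iota|+a-d)}}p_\iota$ should equal (a constant multiple of) the coefficient extraction that turns $p_\iota$ into $N^{-1}(p_\iota)$ evaluated along the diagonal.

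The key computational step is therefore the choice of $s$. For a homogeneous $s$ of degree $a$ in $x_1,\dots,x_m$, the derived polynomial $s^{(j)}$ of degree $a-j$ has the property, used already in the proof of Lemma~\ref{lem:technical-derived}, that $\partial_{s^{(a-|\beta|)}}(x^\beta) = \beta!\binom{\alpha}{\beta}$ when $s = x^\alpha$; summing over a homogeneous piece $p_\iota = \sum_{|\beta|=d-|\iota|}c_{\iota,\beta}x^\beta$ gives $\partial_{s^{(|\iota|+a-d)}}(p_\iota) = \sum_\beta c_{\iota,\beta}\,\beta!\binom{\alpha}{\beta}$ for $s=x^\alpha$. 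Taking instead $s = (x_1+\cdots+x_m)^a$ — which is $\partial$-dual to the diagonal evaluation — one gets $\partial_{s^{(|\iota|+a-d)}}(p_\iota) = \frac{a!}{(d-|\iota|)!}\sum_\beta c_{\iota,\beta}\,\beta! = \frac{a!}{(d-|\iota|)!} N^{-1}(p_\iota)(1,\dots,1)$, since $N^{-1}(p_\iota) = \sum_\beta \beta!\,c_{\iota,\beta}x^\beta$ and evaluating at $(1,\dots,1)$ sums the coefficients. By homogeneity $N^{-1}(p_\iota)(1,\dots,1)\cdot u^{d-|\iota|}$ is exactly $N^{-1}(p_\iota)(u,\dots,u)$ up to the monomial bookkeeping. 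Plugging this into Lemma~\ref{lem:technical-derived} (with $a$ chosen so that all the needed $s^{(|\iota|+a-d)}$ are meaningful, e.g. $a=d$, in which case $s=(x_1+\cdots+x_m)^d$) yields, after dividing by a positive constant, precisely $q(v_1,\dots,v_k,u)$.

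Finally I check that the left-hand side of the Lemma~\ref{lem:technical-derived} identity is a volume polynomial. The polynomial $p\cdot(u+\sum_i x_i)^a$ is a product of the volume polynomial $p$ with the volume polynomial $(u + x_1 + \cdots + x_m)^a$ (a power of a sum of the nef classes corresponding to $u$ and the $x_i$'s, hence a volume polynomial, or alternatively a monomial-type volume polynomial via Lemma~\ref{lem:volume-monomial} and Lemma~\ref{lem:volume-product}); by Lemma~\ref{lem:volume-product} the product is a volume polynomial in the variables $v_i$, $x_i$, $u$. Applying the differential operator $\partial_s$ for the Schubert polynomial $s = (x_1+\cdots+x_m)^a$ — note a power of $x_1+\cdots+x_m$ is a Schubert polynomial, or one can factor it and apply Proposition~\ref{prop:volume-schubert} repeatedly to the linear Schubert polynomials $x_1+\cdots+x_m$ — keeps it a volume polynomial by Proposition~\ref{prop:volume-schubert}, and setting $x=0$ is the non-negative linear substitution covered by Lemma~\ref{lem:volume-trafo}. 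Hence $q$ is a volume polynomial. The main obstacle I anticipate is purely bookkeeping: confirming that the constant $\frac{a!}{(d-|\iota|)!}$ coming out of Lemma~\ref{lem:technical-derived} combines with the $\beta!$ factors to give exactly $N^{-1}$ and not some twisted normalization, and checking that the edge cases $|\iota| = d$ (where $p_\iota$ is a constant and $s^{(a)}$ is the top coefficient of $s$) behave correctly; the geometric content is entirely supplied by the lemmas already proved.
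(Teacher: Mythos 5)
The skeleton of your argument (multiply $p$ by $\bigl(u+\sum_i x_i\bigr)^a$, apply $\partial_s|_{x=0}$ for a Schubert polynomial $s$, and read off the result via Lemma~\ref{lem:technical-derived}) is the right one, but your central computation is wrong, and the error is exactly at the point where the whole difficulty of the proposition lives. With $s=(x_1+\cdots+x_m)^a=\sum_{|\alpha|=a}\tfrac{a!}{\alpha!}x^\alpha$, linearity and \eqref{eq:derived-of-monomial} give, for $|\beta|=d-|\iota|$,
\[
\partial_{s^{(|\iota|+a-d)}}(x^\beta)=\sum_{|\alpha|=a}\frac{a!}{\alpha!}\,\beta!\binom{\alpha}{\beta}
= a!\!\!\sum_{|\gamma|=a-|\beta|}\frac{1}{\gamma!}=\frac{a!\,m^{a-d+|\iota|}}{(a-d+|\iota|)!},
\]
which is \emph{independent of $\beta$}. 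Hence $\partial_{s^{(|\iota|+a-d)}}(p_\iota)$ is a constant times $p_\iota(1,\ldots,1)=\sum_\beta c_{\iota,\beta}$, not $\tfrac{a!}{(d-|\iota|)!}\sum_\beta \beta!\,c_{\iota,\beta}=\tfrac{a!}{(d-|\iota|)!}N^{-1}(p_\iota)(1,\ldots,1)$ as you claim. Your choice of $s$ therefore produces (up to constants) the plain diagonalization $\sum_\iota p_\iota(u,\ldots,u)\,\cdot(\ast)\,v^\iota$, which is the easy statement of Remark~\ref{rem:volume-diagonalization}, not the denormalized one the proposition needs. The polynomial that actually satisfies $\partial_{s}f=N^{-1}(f)(1,\ldots,1)$ is the complete homogeneous symmetric polynomial $s_a=\sum_{|\delta|=a}x^\delta$ (all coefficients equal to $1$, rather than multinomial coefficients), and the fact that \emph{this} is a Schubert polynomial is the real pivot of the proof.

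Even after replacing $(x_1+\cdots+x_m)^a$ by $s_a$, your conclusion ``after dividing by a positive constant one gets precisely $q$'' does not hold. Since $s_a^{(i)}=\binom{a+m-1}{i}s_{a-i}$, the output of Lemma~\ref{lem:technical-derived} carries the coefficient $\tfrac{a!}{(d-|\iota|)!}\binom{a+m-1}{|\iota|+a-d}$ in front of $N^{-1}(p_\iota)(u,\ldots,u)v^\iota$, and the binomial factor depends on $|\iota|$, so it cannot be scaled away by a single global constant or by a substitution $u\mapsto cu$ alone. Removing it requires a genuine limiting argument: one takes $a=d+\ell$, rescales $u\mapsto \ell^{-1}u$, divides by $\ell^{m-1}(d+\ell)!$, lets $\ell\to\infty$, and then performs a second limit $m\to\infty$ (with $u\mapsto mu$ and a factor $(m-1)!$) to eliminate the residual $(d+m-|\iota|-1)!$. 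Using that volume polynomials are closed under these rescalings (Lemma~\ref{lem:volume-trafo}) and under limits is what finally yields $q$. Your proof as written skips this entirely, so the ``bookkeeping'' you flag as a minor anticipated obstacle is in fact the substance of the proposition.
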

\begin{proof}
Let $s_d:=s_{d,m}:= \sum_{|\delta|=d}x^\delta$ be the complete homogeneous symmetric polynomial of degree $d$ in the variables $x_1,\ldots,x_m$.
What is key in this proof is that $s_{d}$ is a Schubert polynomial and for every homogeneous polynomial $f  \in \RR[x_1,\ldots,x_m]$ of degree $d$ we have $\partial_{s_{d}}f=N^{-1}(f)(1,\ldots,1)$, or said another way
  $$(\partial_{s_{d}}f) u^d = N^{-1}(f) (u,\ldots,u).$$
We observe also the identity
  \begin{equation}
    s_{d}^{(i)}=\binom{d+m-1}{i}s_{d-i}.\label{eq:derived-of-complete-homogeneous}
  \end{equation}
  By Lemma~\ref{lem:volume-monomial} and Lemma~\ref{lem:volume-trafo} the polynomial $p \cdot \Big(u+\sum_{i=1}^m x_i \Big)^{d+\ell}$ is a volume polynomial for every $\ell$ and $m$. It then follows from Proposition~\ref{prop:volume-schubert}
  that also 
  \begin{equation}
    \label{eq:qell-in-disguise}
    \partial_{s_{d+\ell}}|_{x=0} \bigg(p \cdot \Big(u+\sum_{i=1}^m x_i\Big)^{d+\ell}\bigg)
  \end{equation}
  is a volume polynomial, since $s_{d+\ell}$ is a Schubert polynomial. Now Lemma~\ref{lem:technical-derived} and (\ref{eq:derived-of-complete-homogeneous}) imply that (\ref{eq:qell-in-disguise}) coincides with
    \begin{equation}
      q_{\ell,m}(v_1,\ldots,v_{k},u)=\sum_{|\iota| \leq d} \frac{(d+\ell)!}{(d-|\iota|)!} \binom{d+\ell+m-1}{|\iota|+\ell}
N^{-1}(p_\iota)(u,\ldots,u)
      \cdot v^\iota \label{eq:qell}
  \end{equation}

  Hence, by Lemma~\ref{lem:volume-trafo} also the limit $\lim_{\ell \to \infty}\frac{1}{\ell^{(m-1)}(d+\ell)!}q_{\ell,m}(v_1\ldots,v_{k},\ell^{-1}u)$ is a volume polynomial (if the limit exists). Here, we obtain 
 %  \begin{equation}
%   \frac{(\ell-1)!q_\ell\left(v_1,\ldots,v_{k},\sfrac{u}{2}\right)}{(d+\ell)!}=\sum_{\iota}\frac{(2\ell+|\iota|) \cdots (2\ell+d-1)}{\ell \cdots (\ell+d-|\iota|-1)\cdot 2^{d-|\iota|}}\cdot \frac{\partial_{s_{d-|\iota|}}p_\iota}{(d-|\iota|)!} \cdot v^\iota u^{d-|\iota|}.\label{eq:limit-poly2}  
% \end{equation}
% \todo{Here is an issue -- this is not true as stated}
% \begin{equation}
%  \frac{q_{\ell,m}\left(v_1,\ldots,v_{k},\ell^{-1}u\right)}{\ell^{m-1} \cdot (d+\ell)!}=\sum_{\iota}\frac{(\ell+|\iota|+1) \cdots (\ell+d+m-1)}{(d+m-|\iota|-1)! \ell^{(d-|\iota|+m-1)}}\cdot \frac{\partial_{s_{d-|\iota|}}p_\iota}{(d-|\iota|)!} \cdot v^\iota u^{d-|\iota|}.\label{eq:limit-poly2}  
%\end{equation}

 \begin{equation}
 \frac{q_{\ell,m}\left(v_1,\ldots,v_{k},\ell^{-1}u\right)}{\ell^{m-1} \cdot (d+\ell)!}=\sum_{\iota}\frac{(\ell+|\iota|+1) \cdots (\ell+d+m-1)}{(d+m-|\iota|-1)! \ell^{(d-|\iota|+m-1)}}\cdot \frac{N^{-1}(p_\iota)(u,\ldots,u)}{(d-|\iota|)!} \cdot v^\iota.\label{eq:limit-poly2}  
\end{equation}

%\todo{correct version}
Now, we observe that the expression in~(\ref{eq:limit-poly2}) converges to 
 %\[
%     q_m(v_1,\ldots,v_{k},u)=\sum_{\iota} \frac{. \partial_{s_{d-|\iota|}}p_\iota}{(d-|\iota|)!(d+m-|\iota|-1)!} \cdot v^\iota u^{d-|\iota|}
%  \]
     \[
     q_m(v_1,\ldots,v_{k},u)=\sum_{\iota} \frac{N^{-1}(p_\iota)(u,\ldots,u)}{(d-|\iota|)!(d+m-|\iota|-1)!} \cdot v^\iota   \]
  as $\ell \to \infty$. Finally, we observe that
  \[q(v_1,\ldots,v_k,u)=\lim_{m\to \infty}(m-1)!\cdot q_m (v_1,\ldots,v_{k},m \cdot u).\]
\end{proof}

% \begin{proposition}
%   \label{prop:diagonalization}
%   If $p=\sum_{|\iota|+|\beta|=d} c_{\iota\beta} v^\iota x^\beta \in \RR[v_1,\ldots,v_{k}, x_1, \ldots,x_m]$ is a denormalized volume polynomial of degree $d$. Then
  
%   \begin{equation}
%     \sum_{|\iota|+j=d} \frac{\sum_{|\beta|=j} c_{\iota\beta}}{(j+m-1)!}v^\iota u^j\label{eq:diagonalization1}   
%   \end{equation}
%   is a denormalized volume polynomial.
% \end{proposition}
\begin{proof}[Proof of Theorem~\ref{thm:diagonalization}]
  If $p(v_1,\ldots,v_k,x_1,\ldots,x_m)=\sum_\iota p_\iota(x_1,\ldots,x_m)v^\iota$ is a denormalized volume polynomial, then $N(p)$ is a volume polynomial by definition. Now, we apply  Proposition~\ref{prop:general-kahn-saks}  to $N(p)$ and obtain that
  \begin{align*}
    \sum_{|\iota| \leq d} \frac{N^{-1}(N(p_\iota))(u,\ldots,u)}{(d-|\iota|)!}\frac{v^\iota}{\iota !}&=
                                                                                                            \sum_{|\iota| \leq d} \frac{p_\iota(u,\ldots,u)}{(d-|\iota|)!} \frac{v^\iota}{\iota !}
    &=N(p(v_1,\ldots,v_{k},u,\ldots,u)).
   % &=\sum_{|\iota| \leq d} \sum_{|\beta|=d-|\iota|} c_{\iota\beta} \frac{u^{d-|\iota|}}{(d-|\iota|)!}  \cdot \frac{v^\iota}{\iota !}
  \end{align*}
  is a volume polynomial.
\end{proof}

% \begin{proof}[Proof of Theorem~\ref{thm:diagonalization}]
%   Assume $p=\sum_{|\iota|+|\beta|=d} c_{\iota\beta} v^\iota x^\beta$. For every $\ell \geq m$ we may consider $p$ as polynomial in
%    $\RR[v_1,\ldots,v_{k-1}, x_1, \ldots,x_\ell]$. Then by Proposition~\ref{prop:diagonalization} we know that
%   \[
%     q(v_1,\ldots,v_{k-1},u)=\sum_{|\iota|+j=d} \frac{\sum_{|\beta|=j} c_{\iota\beta}}{(j+\ell-1)!}v^\iota u^j
%   \]
%   is a denormalized volume polynomial. Now, consider the polynomials
%   \[(\ell-1) !\cdot q(v_1,\ldots,v_{k-1},\ell \cdot u)=\sum_{|\iota|+j=d} \left(\sum_{|\beta|=j} c_{\iota\beta}\right)\frac {\ell^j}{\ell(\ell+1)\cdots(\ell+j-1)}v^\iota u^j.\]
%   These polynomials are again denormalized volume polynomials by Lemma~\ref{lem:volume-trafo} and for $\ell \to \infty$ the right-hand side converges to
%   \[p(v_1,\ldots,v_{k-1},u,\ldots,u)=\sum_{|\iota|+j=d} \sum_{|\beta|=j} c_{\iota\beta}v^\iota u^j.\] Hence, $p(v_1,\ldots,v_{k-1},u,\ldots,u)$ is a denormalized volume polynomial, as well. \todo{One could merge this with the limit process in Proposition~\ref{prop:general-kahn-saks} by setting $m=\ell \to \infty$. In this case the proof of Proposition~\ref{prop:diagonalization} would already suffice.}
% \end{proof}

\begin{remark}
  Our proof actually applies to (denormalizations of) all classes of polynomials, which are preserved under products with linear forms and differential operators with constant coefficients associated to complete homogeneous symmetric polynomials. In particular it applies to (denormalized) Lorentzian polynomial (a simpler proof of this statement about Lorentzian polynomials be found in \cite[Lemma~4.8.]{branden2021lower}).
\end{remark}

\begin{remark}
  Note that all the tools from Section~\ref{sec:tools} are based on geometric constructions. Hence, our proof has a geometric interpretation, as well. Given a volume polynomial coming from divisor classes on some variety $X$  our construction  produces a sequence of volume polynomials living on subvarieties  in  products of $X$ with projective spaces.
\end{remark}

\goodbreak
\section{Application to poset inequalities}
\label{sec:posets}

Let $P$ be a poset of size $n$.   We let $\mathcal E(P)$ be the set of linear extensions of $P$, i.e.bijections $L:P\to \{1,\ldots,n\}$ such that if $x<y$ then $L(x)<L(y)$.

\begin{definition}
  For a poset $P$ of size $n$ and fix a chain $x_1<x_2<\cdots <x_k$ in $P$.  We set
  \(
    F(\iota)=F(\iota_1\ldots,\iota_{k-1})=\#\{L \in \mathcal{E}(P) \mid L(x_{m+1})-L(x_{m})=\iota_m\}
  \)
  and define the \emph{Kahn-Saks polynomial}
  \[ P_{KS} (u,v_1,\ldots,v_{k-1}) := \sum_{|\iota|+j = n-k} {F(\iota_1+1,\ldots,\iota_{k-1}+1)}v^\iota u^{j}.\]
\end{definition}

As an illustration of our main theorem we will show that the Kahn-Saks polynomial is a a denormalized volume polynomial. Note however, that a more direct proof (stated explicitly for  the case $k=3$) can be found in \cite{Chan_Pak_Panova_2024}.

\medskip

For a poset $P=\{x_1,\ldots,x_k,y_{1},\ldots,y_{n-k}\}$ we first consider 
\(\mathcal N(i_1,\ldots,i_k):=|\{L \in \mathcal{E}(P) \mid \forall_j \colon L(x_j)=i_j\}| \)
and the polynomial
\begin{equation}
  p(v_1,\ldots,v_{k-1},u_1,u_2):=\!\!\!\sum_{a+b+|\iota|=n-k} \!\!\!\mathcal N\left(a+1,a+\iota_1,\cdots,a+\sum_j\iota_j +k\right) u_1^au_2^bv^\iota .\label{eq:p}
\end{equation}
Stanley in \cite[Theorem 3.2]{zbMATH03760173} proves that this is a denormalized volume polynomial of in the sense of convex geometry. The theory of toric varieties shows that this can be interpreted also as a volume polynomial in our sense, see \cite[Section~5.4]{zbMATH00447302}.

From this we conclude
  \begin{theorem}
  \label{thm:kahn-saks-polynomial}
  The Kahn-Saks polynomial $P_{KS}$ is a denormalized volume polynomial.
\end{theorem}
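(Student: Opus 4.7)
The plan is to realize $P_{KS}$ as a diagonalization of Stanley's polynomial $p(v_1,\ldots,v_{k-1},u_1,u_2)$ from \eqref{eq:p} and then invoke Theorem~\ref{thm:diagonalization}. Since $p$ is already known to be a denormalized volume polynomial (by Stanley's result, reinterpreted via toric geometry as cited), once the identification is in place the theorem follows essentially for free.

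The first step is to check the identity
\[
P_{KS}(u,v_1,\ldots,v_{k-1}) \;=\; p(v_1,\ldots,v_{k-1},u,u).
\]
For this I would translate between the two parametrizations of the same set of linear extensions. Given $L\in\mathcal E(P)$ with prescribed gaps $L(x_{m+1})-L(x_m)=\iota_m+1$, the value $L(x_1)=a+1$ and the remaining slack $b=n-L(x_k)$ satisfy $a+b = n-k-|\iota|$, and conversely each triple $(a,b,\iota)$ subject to this relation picks out the linear extensions counted by $\mathcal N(a+1,a+\iota_1+2,\ldots,a+k+|\iota|)$. Summing over the partition $a+b=j$ yields
\[
F(\iota_1+1,\ldots,\iota_{k-1}+1) \;=\; \sum_{a+b=j}\mathcal N\bigl(a+1,\,a+\iota_1+2,\,\ldots,\,a+k+|\iota|\bigr),
\]
and upon setting $u_1=u_2=u$ in \eqref{eq:p} the factor $u_1^a u_2^b$ collapses to $u^{a+b}=u^j$, producing exactly the coefficient of $v^\iota u^j$ in $P_{KS}$.

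Once this identification is verified, the proof is completed in one line: $p$ is a denormalized volume polynomial by Stanley's theorem, so by Theorem~\ref{thm:diagonalization} applied to the pair of variables $(u_1,u_2)$ (equivalently, by iterating the one-variable diagonalization), $p(v_1,\ldots,v_{k-1},u,u)=P_{KS}$ is a denormalized volume polynomial as well.

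The only step requiring any care is the combinatorial bookkeeping in matching coefficients; the geometric input (Stanley's theorem) and the algebraic input (diagonalization) are black boxes. The potential pitfall I would double-check is the index shift between the gaps $L(x_{m+1})-L(x_m)$ used in the definition of $P_{KS}$ (which are genuine gap sizes $\geq 1$) and the exponents $\iota_m\geq 0$ used in the summation, together with the apparent typo in the argument list of $\mathcal N$ in \eqref{eq:p}; neither affects the validity of the identification, but both must be written consistently to make the comparison of coefficients unambiguous.
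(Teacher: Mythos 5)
Your proof follows essentially the same route as the paper: identify $P_{KS}(u,v)=p(v,u,u)$ as the diagonalization of Stanley's polynomial $p$ from \eqref{eq:p} and invoke Theorem~\ref{thm:diagonalization}, with the only real content being the combinatorial bookkeeping $\sum_{a+b=n-k-|\iota|}\mathcal N(\cdots)=F(\iota_1+1,\ldots,\iota_{k-1}+1)$, which you verify correctly (including the index shift and noting the typo in the argument list of $\mathcal N$). The paper's proof is terser but is the same argument.
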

  \begin{proof}
    Taking into account that the polynomial $p$ from \eqref{eq:p} is a denormalized volume polynomial, Theorem~\ref{thm:diagonalization} applies to give that
   %  \[
   %   \sum_{|\iota| \leq n-k}\frac{\sum_{a\leq n-k-|\iota|}\mathcal N\left(a+1,a+\iota_1,\cdots,a+\sum_j\iota_j +k\right)}{(n-k-|\iota|+1)!}v^\iota u^{n-k-|\iota|}
   % \]
   \[
     \sum_{|\iota| \leq n-k}\; {\sum_{a\leq n-k-|\iota|}\mathcal N\left(a+1,a+\iota_1,\cdots,a+\sum_j\iota_j +k\right)}v^\iota u^{n-k-|\iota|}
   \]
   is a denormalized volume polynomial.   Now, the result follows from the observation that
    \(\sum_{a\leq n-k-|\iota|}\mathcal N\left(a+1,a+\iota_1,\cdots,a+\sum_j\iota_j +k\right) = F(\iota_1+1,\ldots,\iota_{k-1}+1).\)
  \end{proof}

\begin{corollary}
  \label{cor:AF-type}
   For any  chain $x_1<x_2<\ldots<x_k$ of a poset $P$ and $\iota \in \NN^{k-1}$ we have
  \[
    F(\iota)^2 \geq F(\iota-e_i)F(\iota+e_i) \text{ and } F(\iota)^2 \geq F(\iota-e_i+e_j)F(\iota+e_i-e_j)
  \]
\end{corollary}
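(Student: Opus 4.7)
The plan is to combine Theorem~\ref{thm:kahn-saks-polynomial} with the Khovanskii-Teissier inequality (\ref{eq:khovanskii-teissier}). First I would write the Kahn-Saks polynomial explicitly as
\[ P_{KS}(u,v_1,\ldots,v_{k-1}) = \sum_{|\iota'| + j = n-k} c_{(\iota',j)}\, v^{\iota'} u^j \quad \text{with} \quad c_{(\iota',j)} := F(\iota' + \underline{\mathbf{1}}), \]
and note that Theorem~\ref{thm:kahn-saks-polynomial} guarantees this polynomial is denormalized volume, so the coefficient inequality (\ref{eq:khovanskii-teissier}) applies.

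To obtain the first inequality, I would apply (\ref{eq:khovanskii-teissier}) to $P_{KS}$ with the distinguished pair of indices being the one for $v_i$ and the one for $u$. This produces
\[ c_{(\iota', j)}^2 \geq c_{(\iota' - e_i,\, j+1)}\, c_{(\iota' + e_i,\, j-1)}, \]
and the substitution $\iota = \iota' + \underline{\mathbf{1}}$ converts this directly into $F(\iota)^2 \geq F(\iota - e_i) F(\iota + e_i)$. For the second inequality I would apply (\ref{eq:khovanskii-teissier}) instead with the two distinguished indices being those of $v_i$ and $v_j$; the same substitution then yields $F(\iota)^2 \geq F(\iota - e_i + e_j) F(\iota + e_i - e_j)$.

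The corner cases where $\iota - e_i$ or $\iota - e_i + e_j$ has a negative entry need no separate argument, since $F$ vanishes on such tuples by definition (a linear extension of a chain cannot realize a non-positive gap), so the right-hand side of the inequality is then zero and the statement is vacuous. In fact there is no real obstacle here: the substantive work sits in Theorem~\ref{thm:diagonalization} and its consequence Theorem~\ref{thm:kahn-saks-polynomial}, and the present corollary is simply Khovanskii-Teissier read off at the level of the counting functions $F$.
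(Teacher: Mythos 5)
Your proposal is correct and matches the paper's argument exactly: the paper also deduces this corollary by reading off the Khovanskii--Teissier inequality (\ref{eq:khovanskii-teissier}) for the coefficients of the Kahn--Saks polynomial, which Theorem~\ref{thm:kahn-saks-polynomial} certifies as a denormalized volume polynomial. Your explicit index bookkeeping (swapping the $v_i$/$u$ indices for the first inequality and the $v_i$/$v_j$ indices for the second, with the shift $\iota = \iota' + \underline{\mathbf{1}}$) and the remark on vanishing boundary coefficients are exactly the details the paper leaves implicit.
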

\begin{proof}
  This follows directly from the Khovanskii-Teissier inequality (\ref{eq:khovanskii-teissier}) for its coefficients.
\end{proof}

\begin{corollary}[Kahn-Saks Inequality,  \cite{zbMATH03893257}]
For a chain $x_1<x_2$ in $P$ we have $$F(i)^2 \geq F(i-1)F(i+1).$$
\end{corollary}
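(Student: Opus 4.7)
The plan is to observe that this Kahn--Saks inequality is simply the $k=2$ specialization of Corollary~\ref{cor:AF-type}, so no new ideas are needed beyond unwinding the notation. For a two-element chain $x_1<x_2$ we have $k=2$, hence the index $\iota$ of Corollary~\ref{cor:AF-type} lives in $\NN^{k-1}=\NN$; it is a single non-negative integer, which I relabel as $i$. There is only one standard basis vector in $\NN^1$, namely $e_1=1$, so the first inequality of Corollary~\ref{cor:AF-type},
\[
F(\iota)^2 \geq F(\iota-e_i)F(\iota+e_i),
\]
becomes verbatim $F(i)^2\geq F(i-1)F(i+1)$, which is exactly the claim.

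For completeness, the logical chain powering the result is: Theorem~\ref{thm:kahn-saks-polynomial} says $\PKS$ is a denormalized volume polynomial; the Khovanskii--Teissier inequality~\eqref{eq:khovanskii-teissier} therefore applies to its coefficients $F(\iota_1+1,\ldots,\iota_{k-1}+1)$; Corollary~\ref{cor:AF-type} records this for arbitrary chain length $k$; and the corollary at hand is the degenerate $k=2$ case. There is no serious obstacle since the non-trivial work was already done in establishing Theorem~\ref{thm:diagonalization} and Theorem~\ref{thm:kahn-saks-polynomial}; the only mild point to check is that with $k=2$ the Kahn--Saks polynomial $\PKS(u,v_1)$ is a two-variable denormalized volume polynomial whose coefficients, up to the shift $\iota\mapsto\iota+1$ in the definition, are the numbers $F(i)$, and then Khovanskii--Teissier between the exponents $(i-1,n-k-i+1)$, $(i,n-k-i)$, $(i+1,n-k-i-1)$ on the $v_1$-axis yields exactly the stated inequality.
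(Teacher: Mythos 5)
Your proof is correct and is exactly the paper's argument: the paper also dispatches this as the $k=2$ specialization of Corollary~\ref{cor:AF-type}, whose first inequality with $\iota\in\NN^1$ is verbatim $F(i)^2\geq F(i-1)F(i+1)$. The extra unwinding you give (identifying $e_1$ with $1$ and tracing the dependence back through Theorem~\ref{thm:kahn-saks-polynomial} and the Khovanskii--Teissier inequality) is consistent with, and merely elaborates, the paper's one-line proof.
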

\begin{proof} This is the $k=2$ case of the previous corollary.
\end{proof}

\begin{corollary}
  \label{cor:weak-generalized-crossproduct}
  We have
  \[
   F_P(\alpha)F_P(\alpha + \beta + \gamma) \leq   \binom{|\beta|+|\gamma|}{|\beta|}F_P(\alpha+ \beta)F_P(\alpha+ \gamma)
  \]
\end{corollary}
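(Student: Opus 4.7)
The plan is to deduce this directly from the reverse Khovanskii--Teissier inequality of Lemma~\ref{lem:rKT} applied to the Kahn--Saks polynomial $\PKS(u,v_1,\ldots,v_{k-1})$, which is a denormalized volume polynomial by Theorem~\ref{thm:kahn-saks-polynomial}. The $u$-variable will play the role of the distinguished first coordinate $e_1$ in the rKT inequality.

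First I would dispose of the trivial cases. Recall $F(\iota)=0$ whenever some component $\iota_m$ vanishes (one would need $L(x_{m+1})=L(x_m)$), and also whenever $|\iota|>n-1$. So if either $\alpha\not\ge\underline{\mathbf{1}}$ or $|\alpha+\beta+\gamma|>n-1$, the left-hand side of the claimed inequality is zero and there is nothing to show. Hence we may assume $\alpha\ge \underline{\mathbf{1}}$ and $|\alpha|+|\beta|+|\gamma|\le n-1$; set $\alpha':=\alpha-\underline{\mathbf{1}}\in\NN^{k-1}$, so that $|\alpha'|+|\beta|+|\gamma|\le n-k$.

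Next, I would read off which coefficients of $\PKS$ to plug into Lemma~\ref{lem:rKT}. The polynomial $\PKS$ has total degree $n-k$ in the $k$ variables $(u,v_1,\ldots,v_{k-1})$, and the coefficient of $u^j v^\iota$ is $F(\iota+\underline{\mathbf{1}})$. Applying Lemma~\ref{lem:rKT} with
\[
\tilde\alpha=(a,\alpha'),\qquad \tilde\beta=(0,\beta),\qquad \tilde\gamma=(0,\gamma),
\]
where $a:=n-k-|\alpha'|-|\beta|-|\gamma|\ge 0$ is the slack absorbed into the $u$-exponent, one checks that $|\tilde\alpha|+|\tilde\beta|+|\tilde\gamma|=n-k$ and that the four exponent vectors arising in the rKT inequality are
\[
(a,\alpha'+\beta+\gamma),\quad (a+|\beta|+|\gamma|,\alpha'),\quad (a+|\gamma|,\alpha'+\beta),\quad (a+|\beta|,\alpha'+\gamma),
\]
with coefficients $F(\alpha+\beta+\gamma)$, $F(\alpha)$, $F(\alpha+\beta)$ and $F(\alpha+\gamma)$ respectively, after shifting by $\underline{\mathbf{1}}$.

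Finally, substituting these identifications into the rKT inequality \eqref{eq:rKT} gives
\[
F(\alpha+\beta+\gamma)\,F(\alpha)\;\le\;\binom{|\beta|+|\gamma|}{|\beta|}\,F(\alpha+\beta)\,F(\alpha+\gamma),
\]
which is exactly the claim. There is no real obstacle; the only thing to be careful about is the bookkeeping that shows the choice of $a$ is legal (i.e.\ non-negative) precisely in the non-trivial regime, and that the shift by $\underline{\mathbf{1}}$ lines up correctly between the indexing of coefficients of $\PKS$ and the indexing used by $F$.
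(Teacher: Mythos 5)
Your proposal is correct and takes exactly the same route as the paper, whose proof is simply the one-line observation that this follows from the reverse Khovanskii--Teissier inequality applied to the coefficients of $\PKS$. You have correctly and carefully filled in the bookkeeping — casting $u$ as the distinguished $e_1$-coordinate, shifting by $\underline{\mathbf{1}}$ to go between the exponent of $v^\iota$ and the argument of $F$, choosing the slack $a$, and dispensing with the trivial regime where $F_P(\alpha)$ or $F_P(\alpha+\beta+\gamma)$ vanishes.
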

\begin{proof}
   This follows directly from the reverse Khovanskii-Teissier inequality (\ref{eq:rKT}) for its coefficients.
\end{proof}

\begin{remark}
  The fact that the Kahn-Saks polynomial $\PKS$ is denormalized Lorentzian only gives the  weaker estimate
  \[
    F_P(\alpha)F_P(\alpha + \beta + \gamma) \leq   2^{|\beta||\gamma|}F_P(\alpha+ \beta)F_P(\alpha+ \gamma),
  \]
  see \cite[Thm.~2.5]{hu2023intersection}.
\end{remark}

Corollary~\ref{cor:weak-generalized-crossproduct} can be interpreted as a weak form of the generalized cross-product conjecture from \cite{zbMATH07576885}, which states that
\begin{equation}
  F_P(\alpha)F_P(\alpha + \beta + \gamma) \leq   F_P(\alpha+ \beta)F_P(\alpha+ \gamma)\label{eq:generalized-crossproduct}    .
\end{equation}

Let us formulate an even stronger version of this conjecture.
\begin{conjecture}
  \label{con:kahn-saks-rayleigh}
  The normalized Kahn-Saks polynomial for a chain $x_1 \leq  \ldots \leq x_k$ is $1$-Rayleigh, i.e. 
  \[
    \partial^\alpha  N(\PKS) \partial^{\alpha +e_i +e_j} N(\PKS)(u,v)  \leq   \partial^{\alpha+e_i}  N(\PKS) \partial^{\alpha + e_j} N(\PKS)(u,v)
  \]
  or every $(u,v) \in \RR_{\geq 0}^k$.
\end{conjecture}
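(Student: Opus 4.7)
The plan is to attack Conjecture~\ref{con:kahn-saks-rayleigh} by upgrading the argument of Theorem~\ref{thm:diagonalization} to track the 1-Rayleigh property throughout. Recall that $N(\PKS)$ is obtained (up to normalization) by diagonalizing the order-polytope volume polynomial $p$ of (\ref{eq:p}), and that the proof of the diagonalization theorem exhibits this diagonalization as an explicit limit of polynomials built from $p$ by the operations of Section~\ref{sec:preliminaries}. The natural strategy therefore splits into two steps: (i) prove that $p$ (equivalently $N(p)$) is 1-Rayleigh, and (ii) prove that each of the operations used in the proof of Theorem~\ref{thm:diagonalization} preserves 1-Rayleigh.

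For step (ii), the operations in play are multiplication by a linear form $u + \sum x_i$ with nonnegative coefficients, application of the Schubert differential operator $\partial_{s_{d+\ell}}$, specialization $x\mapsto 0$, linear substitutions with nonnegative matrix entries, and finally two successive limits in $\ell$ and $m$. Multiplication by a nonnegative linear form preserves real-stability and therefore 1-Rayleigh; specializations, nonnegative substitutions and pointwise limits all preserve Rayleigh. The delicate ingredient is the operator $\partial_{s_{d+\ell}}$, because $s_{d+\ell}$ is a sum of many monomials and a general sum of pure partial derivatives need not preserve Rayleigh. The promising observation here is that $\partial_{s_d}f$ coincides with $N^{-1}(f)(1,\ldots,1)\,u^d$, as used in the proof of Proposition~\ref{prop:general-kahn-saks}, so the operator has a direct geometric meaning as a ``total evaluation''; one would try to exploit this together with the concrete form of the composite $\partial_{s_{d+\ell}}\!\bigl(p\cdot(u+\sum x_i)^{d+\ell}\bigr)$ to preserve the Rayleigh estimate at each stage of the argument.

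For step (i), the question becomes whether the volume polynomial of the order polytope appearing in Stanley's construction (\ref{eq:p}) satisfies a 1-Rayleigh inequality. This is a genuinely new combinatorial positivity statement, since volume polynomials of general polytopes are only known to be Lorentzian, not Rayleigh. Two natural attacks are: first, use the canonical unit-simplex triangulation of the order polytope indexed by linear extensions to produce a direct injective argument on pairs of linear extensions; second, apply the Shenfeld--van Handel extremal analysis of the Alexandrov--Fenchel inequality, which has been effective in producing strengthened inequalities for combinatorially structured convex bodies.

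The main obstacle is step (i): 1-Rayleigh for order polytopes strictly strengthens Lorentzianity, and would itself be a significant new result. As a preliminary sanity check one should verify Conjecture~\ref{con:kahn-saks-rayleigh} numerically on small posets; if the sharp constant $1$ turns out to be too optimistic, a weaker $c$-Rayleigh statement with some $c$ between $1$ and the $2^{|\beta||\gamma|}$ bound of \cite{hu2023intersection} would already interpolate between Corollary~\ref{cor:weak-generalized-crossproduct} and the generalized cross-product conjecture (\ref{eq:generalized-crossproduct}), and such a weaker form might be accessible by interpolating the Rayleigh inequality through the limiting procedure in the proof of Proposition~\ref{prop:general-kahn-saks} with controlled loss.
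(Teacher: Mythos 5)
You are attempting to prove a statement that the paper itself records only as a \emph{conjecture} (Conjecture~\ref{con:kahn-saks-rayleigh}); the paper offers no proof of it, only the implication in Proposition~\ref{prop:rayleigh-crossproduct} that it would yield the generalized cross-product conjecture. Your text is a research programme rather than a proof, and by your own admission it does not close the argument: step (i), the $1$-Rayleigh property for the Stanley order-polytope polynomial $p$ of (\ref{eq:p}), is left entirely open (``would itself be a significant new result''), and no candidate injection or extremal analysis is actually carried out. So there is a genuine gap, and it is exactly the content of the conjecture.

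Step (ii) also contains claims that would not survive scrutiny even granting step (i). The assertion that ``multiplication by a nonnegative linear form preserves real-stability and therefore 1-Rayleigh'' is inapplicable here: volume polynomials such as $p$ are Lorentzian but generally not real stable, so stability-preservation arguments give you nothing. More importantly, the $1$-Rayleigh inequality is a coordinate-dependent condition involving specific partial derivatives, and neither nonnegative linear substitutions (in particular the diagonalization $x_i\mapsto u$ itself) nor the operator $\partial_{s_{d+\ell}}$ is known to preserve it with constant $1$; this is precisely why the Lorentzian machinery only yields the $2^{|\beta||\gamma|}$-type constants of \cite{hu2023intersection} cited in the paper's own remark. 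Your suggestion to fall back on a $c$-Rayleigh statement with controlled loss is sensible as a research direction, but as written neither the sharp conjecture nor any weaker quantitative version is established.
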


\begin{proposition}
  \label{prop:rayleigh-crossproduct}
  Conjecture~\ref{con:kahn-saks-rayleigh} implies the  generalized cross-product conjecture (\ref{eq:generalized-crossproduct})
  for chains $x_1 \leq  \ldots \leq x_k$ in a poset $P$.% , i.e.
  % \[
  %   F_P(\alpha)F_P(\alpha + \beta + \gamma) \leq   F_P(\alpha+ \beta)F_P(\alpha+ \gamma)
  % \]
\end{proposition}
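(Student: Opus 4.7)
The strategy is to iterate the hypothesized Rayleigh inequality and evaluate at a judicious point. Writing $g:=N(\PKS)$, I would first observe that the hypothesis of Conjecture~\ref{con:kahn-saks-rayleigh} is equivalent to the statement that, at each nonnegative $y$, the ratio $\partial^{\sigma+e_i}g(y)/\partial^{\sigma}g(y)$ is non-increasing as $\sigma$ is incremented by any unit vector. Iterating this monotonicity coordinate by coordinate yields the iterated Rayleigh inequality
\[
\partial^{\alpha}g(y)\cdot \partial^{\alpha+\beta+\gamma}g(y)\le \partial^{\alpha+\beta}g(y)\cdot\partial^{\alpha+\gamma}g(y)
\]
for every $\alpha,\beta,\gamma\in\NN^k$ and every $y\in\RR_{\geq 0}^k$, by a standard induction on $|\beta|+|\gamma|$.

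Next, I would evaluate at the point $e_u:=(0,\dots,0,1)$. From the explicit form $N(\PKS)(v,u)=\sum_{|\iota|+j=n-k}\frac{F_P(\iota+\underline{1})}{\iota!\,j!}v^{\iota}u^{j}$ a direct computation gives
\[
\partial^{\sigma}g(e_u)=\frac{F_P(\sigma_v+\underline{1})}{(n-k-|\sigma|)!}
\]
for every $\sigma\in\NN^k$ with $|\sigma|\le n-k$, where $\sigma_v$ denotes the $v$-part of $\sigma$. For a given instance of~(\ref{eq:generalized-crossproduct}) with $\alpha,\beta,\gamma\in\NN^{k-1}$, I would take multi-indices $\tilde\alpha,\tilde\beta,\tilde\gamma\in\NN^k$ whose $v$-parts are $\alpha-\underline{1}$, $\beta$, $\gamma$, and choose the $u$-components so that the four factorial denominators arising above align and cancel, yielding exactly $F_P(\alpha)F_P(\alpha+\beta+\gamma)\le F_P(\alpha+\beta)F_P(\alpha+\gamma)$.

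The step I expect to be the main obstacle is this final factorial accounting. A naive selection, for example with vanishing $u$-components, produces only a bound of the form $F_P(\alpha)F_P(\alpha+\beta+\gamma)\le C\cdot F_P(\alpha+\beta)F_P(\alpha+\gamma)$ with $C=\frac{(n-k-|\tilde\alpha|)!\,(n-k-|\tilde\alpha+\tilde\beta+\tilde\gamma|)!}{(n-k-|\tilde\alpha+\tilde\beta|)!\,(n-k-|\tilde\alpha+\tilde\gamma|)!}$ strictly greater than $1$ (though strictly smaller than the binomial constant $\binom{|\beta|+|\gamma|}{|\beta|}$ of Corollary~\ref{cor:weak-generalized-crossproduct}). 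Eliminating this residual prefactor to reach the sharp version~(\ref{eq:generalized-crossproduct}) is the delicate point, and I expect it to require either applying iterated Rayleigh over a family of choices of the $u$-components and combining the resulting inequalities, or exploiting that $N(\PKS)$ is not merely $1$-Rayleigh but also a normalized volume polynomial.
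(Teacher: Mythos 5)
Your first two steps agree with the paper's: iterated Rayleigh (the paper quotes this as \cite[(14)]{hu2023intersection}; your monotonicity argument reproduces it) and evaluation at the point with $u=1$, $v=0$, which produces the factorially-weighted inequality and, after clearing denominators, a bound with a prefactor of the form $\frac{N!\,(N-B-C)!}{(N-B)!\,(N-C)!}$ where $N=n-k-|\alpha|$, $B=|\beta|$, $C=|\gamma|$. You correctly identify this residual prefactor as the obstacle.

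But your two proposed ways to remove it will not work, and the paper does something different. First, choosing $u$-components of the multi-indices cannot drive the prefactor to $1$: any positive $u$-component only increases $B$ or $C$, and for $B,C>0$ the ratio $\frac{N!\,(N-B-C)!}{(N-B)!\,(N-C)!}=\prod_{i=1}^{B}\frac{N-B+i}{N-B-C+i}$ is strictly greater than $1$, so no assignment of $u$-parts reaches equality. Second, invoking that $N(\PKS)$ is a normalized volume polynomial is exactly the hypothesis already in play and does not by itself produce a limiting argument. The missing idea is Lemma~\ref{lem:poset-augmentation}: replace $P$ by the ordinal sum $P^{(\ell)} = P \oplus [\ell]$, a poset of size $n+\ell$ for which the $F$-values along the given chain are \emph{unchanged} (any linear extension of $P^{(\ell)}$ must place the new chain $[\ell]$ at the top). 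Running the argument on $P^{(\ell)}$ replaces $n$ by $n+\ell$ in the prefactor, giving $\prod_{i=1}^{B}\frac{n+\ell-k-|\alpha|-B+i}{n+\ell-k-|\alpha|-B-C+i}$, which tends to $1$ as $\ell\to\infty$ while the $F$-values stay fixed. This limit is what yields the sharp inequality; without it, your argument stalls exactly where you flagged.
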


To show this we first need to consider ordinal sums of our posets. Let $P,Q$ be posets. Then the ordinal sum $P \oplus Q$ has the disjoint union of $P$ and $Q$ is its underlying set, with partial order given by \[x \leq_{P \oplus Q} y \qquad \Leftrightarrow \qquad x \leq_P y \text{ or } x \leq_Q y \text{ or } (x \in P \text{ and } y \in Q).\]
\begin{lemma}
  \label{lem:poset-augmentation}
  Consider $P^{(\ell)}=P \oplus [\ell]$ for the totally order set $[\ell]=\{z_1\leq  \ldots \leq z_\ell\}$.
  Given $x_1,\ldots,x_k \in P \subset P'$ and $\iota=(\iota_1, \ldots, \iota_{k-1}) \in \ZZ^{k-1}$ we then
  have \[F_{P^{(\ell)}}(\iota)=F_P(\iota).\]
\end{lemma}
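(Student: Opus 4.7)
The plan is to establish a value-preserving bijection between $\mathcal{E}(P^{(\ell)})$ and $\mathcal{E}(P)$ that fixes the values assigned to the elements $x_1, \ldots, x_k$, which by the definition of $F$ immediately gives the claim.

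First I would unpack the definition of the ordinal sum: every element of $[\ell]$ lies strictly above every element of $P$ in $P^{(\ell)}$. Consequently, for any $L \in \mathcal{E}(P^{(\ell)})$, the values $L(z_1), \ldots, L(z_\ell)$ must exhaust the top $\ell$ positions $\{|P|+1, \ldots, |P|+\ell\}$, and since $z_1 < z_2 < \cdots < z_\ell$ in $[\ell]$, we are forced to have $L(z_i) = |P| + i$ for each $i$. In particular, the restriction $L|_P$ is a bijection onto $\{1, \ldots, |P|\}$, and it is automatically order-preserving, hence an element of $\mathcal{E}(P)$.

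Next I would verify that restriction is a bijection $\mathcal{E}(P^{(\ell)}) \to \mathcal{E}(P)$, with inverse given by extending an $L' \in \mathcal{E}(P)$ by setting $L'(z_i) := |P|+i$. The extension is a valid linear extension of $P^{(\ell)}$ since all elements of $[\ell]$ are placed above all elements of $P$ in the natural order, which is consistent with the ordinal-sum relations.

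Finally, since $x_1, \ldots, x_k$ all lie in $P$, the restriction bijection satisfies $L(x_m) = (L|_P)(x_m)$ for every $m$, so the differences $L(x_{m+1}) - L(x_m)$ are identical on the two sides of the bijection. Therefore the subset of $\mathcal{E}(P^{(\ell)})$ counted by $F_{P^{(\ell)}}(\iota)$ corresponds under this bijection exactly to the subset of $\mathcal{E}(P)$ counted by $F_P(\iota)$, giving the desired equality. There is no real obstacle here; the whole content of the lemma is the observation that adjoining a chain on top of $P$ only freezes the top $\ell$ positions and does not alter any statistic depending solely on positions inside $P$.
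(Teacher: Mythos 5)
Your proof is correct and is essentially identical to the paper's: both argue that the elements $z_i$ are forced into the top $\ell$ positions in order, so restriction $L \mapsto L|_P$ is a bijection $\mathcal{E}(P^{(\ell)}) \to \mathcal{E}(P)$ preserving the statistics $L(x_{m+1})-L(x_m)$. Your write-up is just slightly more explicit about the inverse map.
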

\begin{proof}
  Assume $|P|=n$ and let $L \in \mathcal{E}(P^{(\ell)})$ be a linear extension, then we must have $L(z_i)=n+i$,
  since we have $z_1\leq  \ldots \leq z_\ell$  and those elements are larger than every element in $P$.
  Hence, $L \mapsto L|_{P}$ induces a bijection $\mathcal{E}(P^{(\ell)}) \to \mathcal{E}(P)$, which obviously preserves the distances $L(x_{i+1})-L(x_i)$.
\end{proof}

\begin{proof}[Proof of Proposition~\ref{prop:rayleigh-crossproduct}]
  Given a poset of size $n$ and a chain of size $k$.  By \cite[(14)]{hu2023intersection} the $1$-Rayleigh property implies  \[\partial^\alpha  N(\PKS) \partial^{\alpha+\beta+\gamma}N(\PKS) (u,v) \leq \partial^{\alpha+\beta} N(\PKS) \partial^{\alpha+\gamma}N(\PKS) (u,v)\] for every $(u,v) \in \RR_{\geq 0}^k$. Evaluating in $(1,\underline{\mathbf 0})$ gives
  \[
    \frac{F_P(\alpha)}{(n-k-|\alpha|)!}\frac{F_P(\alpha+ \beta+\gamma)}{(n-k-|\alpha|-|\beta|-|\gamma|)!} \leq \frac{F_P(\alpha+\beta)}{(n-k-|\alpha|-|\beta|)!}\frac{F_P(\alpha+\gamma)}{(n-k-|\alpha|-|\gamma|)!} 
  \]
  and therefore
  \begin{align*}
    F_P(\alpha)F_P(\alpha+\beta+\gamma) &\leq \frac{(n-k-|\alpha|)!(n-k-|\alpha|-|\beta|-|\gamma|)!}{{(n-k-|\alpha|-|\beta|)!(n-k-|\alpha|-|\gamma|)!}}F_P(\alpha+\beta)F_P(\alpha+\gamma) \\
                                              &= \frac{\prod_{i=1}^{|\beta|}(n-k-|\alpha|-|\beta|+i)}{\prod_{i=1}^{|\beta|}(n-k-|\alpha|-|\beta|-|\gamma|+i)}F_P(\alpha+\beta)F_P(\alpha+\gamma).
  \end{align*}
  Applying the above to the poset $P^{(\ell)}=P \oplus [\ell]$ of size $n+\ell$ and taking into account Lemma~\ref{lem:poset-augmentation} we obtain
  \[ F_P(\alpha)F_P(\alpha+\beta+\gamma)  \leq   \prod_{i=1}^{|\beta|} \frac{n+\ell-k-|\alpha|-|\beta|+i}{n+\ell -k-|\alpha|-|\beta|-|\gamma|+i}F_P(\alpha+\beta)F_P(\alpha+\gamma).\]
  Taking the limit for $\ell \to \infty$ gives the desired result.
\end{proof}

\printbibliography
\end{document}